\newtheorem{theorem}{Theorem}[section]
\newtheorem{lemma}[theorem]{Lemma}
\newtheorem{proposition}[theorem]{Proposition}
\theoremstyle{definition}
\theoremstyle{remark}
\theoremstyle{definition}
\numberwithin{equation}{section}
\newcommand{\norm}[1]{\left\Vert#1\right\Vert}
\newcommand{\set}[1]{\left\{#1\right\}}
\newcommand{\R}{\mathbb R}
\newcommand{\C}{\mathbb C}
\newcommand{\HH}{\mathcal{H}}
\newcommand{\Ch}{\mathcal{C}}
\newcommand{\MM}{\mathcal{M}}
\newcommand{\vv}{\mathrm{v}}
\newcommand{\eps}{\varepsilon}
\renewcommand{\Im}{\mathrm{Im}}
\renewcommand{\Re}{\mathrm{Re}}
\begin{document}

\title[Dynamics of solitary waves]{Long time dynamics of highly concentrated solitary waves for the nonlinear Schr\"odinger equation}%
\author{Claudio Bonanno}%
\address{Dipartimento di Matematica, Universit\`a di Pisa, Largo Bruno Pontecorvo n.5, 56127 Pisa, Italy}%
\email{bonanno@dm.unipi.it}%

\begin{abstract}
In this paper we study the behavior of solutions of a nonlinear Schr\"odinger equation in presence of an external potential, which is allowed be singular at one point. We show that the solution behaves like a solitary wave for long time even if we start from a unstable solitary wave, and its dynamics coincide with that of a classical particle evolving according to a natural effective Hamiltonian.
\end{abstract}
\maketitle
\section{Introduction and statement of the results} \label{sec:intro}
In this paper we study the long time dynamics of a solitary wave solution of a nonlinear Schr\"odinger equation (NLS) in presence of an external potential. This problem has been considerably studied in the last years, following the tradition of the work on the stability of solitons which dates back to Weinstein \cite{weinstein}. 

The first dynamical results are given in \cite{bj} and improved, along the same lines, in \cite{keraani}. This first approach is purely variational and is based on the non-degeneracy conditions proved in \cite{we2} for the ground state of the elliptic equation solved by the function describing the profile of a soliton. This approach has been used also in \cite{sing}, where the results of \cite{bj,keraani} are extended to the case of a potential with a singularity.

A second line of investigations on our problem has been initiated in \cite{froh1,froh2}. In these papers the authors have strongly used the Hamiltonian nature of NLS, approximating the solution by its symplectic projection on the finite dimensional manifold of solitons (see \eqref{man-eps}, which is a sub-manifold of that used in \cite{froh1,froh2}, since we fix the profile $U$). This approach has been improved in \cite{hz2,hz} for the Gross-Pitaevskii equation by showing that it is possible to obtain an exact dynamics for the center of the soliton approximation. 

In the previous papers the non-degeneracy condition for the ground state is a fundamental assumption. It has been removed in a more recent approach introduced in \cite{bgm1,bgm2}. The idea of these papers is that it is possible for the solution of the NLS to remain concentrated for long time and to have a soliton behavior, even if the profile of the initial condition is degenerate for the energy associated to the elliptic equation. In fact the concentration of the solution follows in the semi-classical regime from the role played by the nonlinear term, which in \cite{bgm1,bgm2} is assumed to be dependent on the Planck constant. This approach has been used in \cite{choq} for the NLS with a Hartree nonlinearity, in which case the non-degeneracy of the ground state is for the moment an open question.

In this paper we put together the last two approaches and try to weaken as much as possible the assumptions on the solitary wave. First of all, one main difference is that we control only the $L^{2}$ norm of the difference between the solution of NLS and the approximating traveling solitary wave. This has been done also in \cite{colliding}, and allows to drop the non-degeneracy condition and consider more general nonlinearities. Moreover we prove that the approximation of the solution of NLS with a traveling solitary wave is good also if the solitary wave is not stable, that is it is not a soliton, and the profile is fixed. This choice partly destroys the symplectic structure used in \cite{froh1} and subsequent papers, but we prove that there exists a particular projection on the manifold $\MM_{\eps}$ defined in \eqref{man-eps} which is almost symplectic for long time. Actually this particular projection is natural, since it is defined in terms of the Hamiltonian functional of NLS restricted to the manifold $\MM_{\eps}$, called the \emph{effective Hamiltonian} in \cite{hz}. Then, this almost symplectic projection is enough to prove that the approximation is good for long time. Finally, we remark that we are able to consider the cases of regular and singular external potentials at the same time, and slightly improve on the range of allowed behavior at the singularity with respect to \cite{sing}. 

In the remaining part of this section we describe the problem and the main result and discuss the assumptions. In Section \ref{sec:ham} we use the Hamiltonian nature of NLS to introduce the effective Hamiltonian on the manifold $\MM_{\eps}$ and to find the ``natural'' projection of the solution on $\MM_{\eps}$. In Sections \ref{sec:approx} and \ref{sec:proof} we describe the approximation of the solution of NLS and prove the main result. Finally in the appendix we show that our projection is almost symplectic for long time.

\subsection{The problem and the assumptions}
We study the behavior of solutions $\psi(t,\cdot) \in H^{1}(\R^{N},\C)$, with $N\ge 3$ to the initial value problem
\begin{equation}\label{prob-eps} 
\left\{
\begin{array}{l}
i \eps \psi_{t} + \eps^{2} \triangle \psi - f(\eps^{-2\alpha}|\psi|^{2}) \psi = V(x) \psi \\[0.2cm]
\psi(0,x) = \eps^{\gamma} U(\eps^{-\beta}(x-a_{0}))\, e^{\frac i\eps (\frac 12 (x-a_{0}) \cdot \xi_{0} + \theta_{0} )} 
\end{array}
\right. \tag{$\mathcal{P}_\eps$}
\end{equation}
where $\eps>0$ represents the Planck constant, $\alpha,\beta,\gamma$ are real parameters, $(a_{0},\xi_{0}) \in \R^{N}\times \R^{N}$ are the initial conditions of the finite dimensional dynamics which the solution follows, $\theta\in \R$ is the phase shift. Moreover $U\in H^{1}(\R^{N})$ is a positive function which satisfies
\begin{equation}\label{ellittica}
-\triangle U + f(U^{2})U - \omega U = 0
\end{equation}
for some $\omega \in \R$, and such that
\begin{itemize}
\item[(C1)] $\norm{U}_{L^{2}}^{2} = \rho>0$;
\item[(C2)] $U(x)$ is in $L^{\infty}(\R^{N})$ and vanishes as $|x|\to \infty$ fast enough so that
\[
\norm{|x|U^{2}}_{L^{1}} +\norm{|x|^{2}U^{2}}_{L^{2}}+ \norm{|x| |\nabla U|}_{L^{2}} < \infty
\]
\end{itemize}
Finally we assume that $f$ satisfies
\begin{itemize}
\item[(N1)] There exists a $C^{3}$ functional $F:H^{1}\to \R$ such that $d(F(|\psi|^{2})) = 2 f(|\psi|^{2})\psi$;
\item[(N2)] if $\varphi \in L^{r}$, for some $r\in (2,\frac{2N}{N-2})$, and $U$ is as above, then there exists $C=C(\varphi,U)>0$ only depending on $\varphi$ and $U$ such that
\[
\Big| \int_{\R^{N}}\, \Big[f(|U+v|^{2})(U+v) - f(U^{2})U - (2f'(U^{2})U^{2}+f(U^{2}))\Re (v) - i\, f(U^{2}) \Im(v) \Big]\, \overline{\varphi}\, dx \Big| \le C(\varphi,U)\, \norm{v}_{L^{2}}
\]
for all $v\in H^{1}$ with $\norm{v}_{L^{2}}\le 1$.
\end{itemize}

In studying the behavior of a solution $\psi(t,x)$ to \eqref{prob-eps}, the potential $V$ is considered as an external perturbation and, when $V\equiv 0$ we ask the solution to the initial value problem to be a solitary wave traveling along the unperturbed trajectory $a(t) = a_{0} + \xi_{0} t$, $\xi(t) = \xi_{0}$, namely
\[
\psi_{_{V\equiv 0}}(t,x) = \eps^{\gamma} U(\eps^{-\beta}(x-a(t)))\, e^{\frac i\eps (\frac 12 (x-a(t)) \cdot \xi(t) + \theta(t) )}
\]
Using this expression for $\psi$ in \eqref{prob-eps} with $V\equiv 0$, we obtain an identity if
\[
- \eps^{2-2\beta} \triangle U + f(\eps^{2(\gamma-\alpha)} U^{2})U + \Big(\dot \theta(t) - \frac 14 |\xi_{0}|^{2}\Big) U =0
\]
Using \eqref{ellittica} this implies that either
\begin{equation}\label{par1}
\alpha = \gamma\, , \quad \beta=1\quad \text{and} \quad \theta(t) = \Big(\frac 14 |\xi_{0}|^{2}- \omega \Big) t \, ,
\end{equation}
or we assume that 
\begin{itemize}
\item[(N3)] $f$ is homogeneous of degree $p\in (0, \frac{2}{N-2})$;
\item[(N4)] $\beta = 1 + (\alpha - \gamma) p$ 
\end{itemize}
and $\theta(t) = \Big(\frac 14 |\xi_{0}|^{2} - \eps^{2-2\beta} \omega \Big) t$.

Notice that \eqref{par1} is a particular case of condition (N4), for which we don't need (N3). So in the sequel we assume (N3) and (N4) with the warning that (N3) is not necessary if the particular condition \eqref{par1} holds.

Finally for what concerns the potential $V$ we consider two possible cases:
\begin{itemize}
\item[(Vr)] $V:\R^{N}\to \R$ is a $C^{2}$ function which is bounded from below and with bounded second derivatives, namely 
\[
h_{V}:= \sup_{x} \Big( \max_{i,j} \Big| \frac{\partial^{2}V}{\partial x_{i} \partial x_{j}}(x)\Big| \Big) < \infty\, ;
\]
\item[(Vs)] $V$ is singular at $x=0$ and satifisfies
\begin{itemize}
\item[(Vs1)] $V:\R^{N}\setminus \{0\} \to \R$ is of class $C^{2}$;
\item[(Vs2)] $|V(x)| \sim |x|^{-\zeta}$ and $|\nabla V(x)| \lesssim |x|^{-\zeta -1}$ as $|x|\to 0$ for some $\zeta \in (0,2)$; 
\item[(Vs3)] $V\in L^{m}(\{|x|\ge 1\})$ for $m>\frac N2$ and $|\nabla V(x)|\to 0$ as $|x|\to \infty$.
\end{itemize}
\end{itemize}

\subsection{The main result}
In this paper we prove that

\begin{theorem}\label{main-result}
Let $U$ be a positive solution of \eqref{ellittica} for some $\omega \in \R$ and satisfying (C1) and (C2). Let $f$ satisfy (N1)-(N3) and $\alpha, \beta, \gamma \in \R$ satisfy (N4) and assume that
$\beta \ge 1$, $\alpha\ge \gamma\ge 0$ and, if $N=3$
\begin{equation}\label{serve3}
\beta < 2\gamma +2 ,
\end{equation}
and no further assumption if $N\ge 4$. Then we have
\[
\delta := 1 + \gamma + \beta\left(\frac N2 -2 \right)> 0 \, .
\]
Let $(a(t), \xi(t), \vartheta(t))$ be the solution of the system
\[
\left\{
\begin{aligned}
& \dot a = \xi \\
& \dot \xi = - \frac 2\rho\, \int_{\R^{N}} \nabla V(a+\eps^{\beta}x) U^{2}(x)\, dx\\
& \dot \vartheta = \frac 14 |\xi(t)|^{2} - \eps^{2-2\beta}\, \omega - V(a(t))
\end{aligned}
\right.
\]
with initial condition $(a_{0},\xi_{0},\theta_{0})$, and assume that $V$ satisfies (Vr) or (Vs). If $V$ satisfies (Vs) we also assume that $(a_{0},\xi_{0},\theta_{0})$ is such that $\min_{t} |a(t)| = \bar a>0$.
\vskip 0.2cm
\indent If the solution $\psi(t,x)$ to \eqref{prob-eps} exists for all $t\in \R$, then we can write
\begin{equation} \label{forma}
\psi(t,x) = \eps^{\gamma}\, U(\eps^{-\beta}(x-a(t))) \, e^{\frac i\eps ( \frac 12 (x-a(t))\cdot \xi(t) + \vartheta(t)} + w(t,x)\, e^{\frac i\eps ( \vartheta(t)-\theta_{0})}
\end{equation}
where for any fixed $\eta \in (0,\delta)$
\begin{equation} \label{errore}
\norm{w(t,x)\, e^{\frac i\eps ( \vartheta(t)-\theta_{0})}}_{L^{2}} = O(\eps^{\eta})
\end{equation}
for all $t\in (0,T)$ with $T=O(\eps^{\eta-\delta})$.
\end{theorem}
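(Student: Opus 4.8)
The plan is to implement a modulation/Lyapunov argument controlling the $L^2$-size of the remainder $w$. First I would set up the ansatz: write $\psi(t,x) = \eps^\gamma U(\eps^{-\beta}(x-a(t)))e^{\frac i\eps(\frac12(x-a(t))\cdot\xi(t)+\vartheta(t))} + \tilde w$, and observe that the quantities conserved (or almost conserved) by NLS are the mass $\int|\psi|^2$ and the Hamiltonian energy $E_\eps(\psi) = \int \eps^2|\nabla\psi|^2 + F(\eps^{-2\alpha}|\psi|^2) + V|\psi|^2$. The idea is that the modulation parameters $(a,\xi,\vartheta)$ are to be chosen \emph{not} by symplectic orthogonality (which is partly unavailable here) but by the "natural" projection onto $\MM_\eps$ coming from the effective Hamiltonian introduced in Section \ref{sec:ham}; by the appendix, this projection is almost symplectic for times $O(\eps^{\eta-\delta})$, which is exactly what lets the standard coercivity argument go through.

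The key steps, in order, would be: (i) derive the equation satisfied by the rescaled remainder $v$ (where $w(t,x) = \eps^\gamma v(t,\eps^{-\beta}(x-a(t)))$ up to phases), collecting the "source" terms produced by plugging the soliton ansatz into \eqref{prob-eps} — these are of two types, the potential-variation terms $[V(x) - V(a) - \nabla V(a)\cdot(x-a)]$ acting on the profile, which by (C2) and (Vr)/(Vs2)–(Vs3) are $O(\eps^{2\beta})$ in the relevant rescaled norm, and the modulation-equation error terms, which vanish to leading order precisely because $(a,\xi,\vartheta)$ solve the stated ODE system (the $\dot\xi$ equation is the averaged force $-\frac2\rho\int\nabla V\,U^2$, which is the gradient of the effective Hamiltonian); (ii) define the Lyapunov functional $\mathcal{L}(t) := E_\eps(\psi(t)) - E_\eps(\text{soliton}(t)) - (\text{Lagrange multiplier})\cdot(\text{mass difference})$, which is conserved up to the explicit time-dependence of $V$ along the soliton, hence $\dot{\mathcal L}$ is controlled by the same source terms; (iii) expand $\mathcal L$ to second order in $v$: the linear term is killed by the choice of projection (almost-symplecticity, from the appendix), and the quadratic term is $\langle L v, v\rangle$ where $L$ is the linearized operator around $U$ — here, crucially, because we only demand \emph{$L^2$} control and have assumed (N1)–(N2) rather than non-degeneracy, I do not need coercivity of $L$ on a codimension-finite subspace in the strong sense; instead I combine $\|v\|_{L^2}$ with mass conservation ($\int|\psi|^2$ exactly conserved forces $\|v\|_{L^2}^2$ to be tied to a lower-order bilinear expression) to close a differential inequality; (iv) run Gronwall: $\frac{d}{dt}\|v\|_{L^2}^2 \lesssim \eps^{\text{(something)}}\|v\|_{L^2} + (\text{source})$, which integrated over $[0,T]$ with $T = O(\eps^{\eta-\delta})$ yields $\|v\|_{L^2} = O(\eps^{\eta-\gamma-\ldots})$, i.e. \eqref{errore} after undoing the rescaling (the factor $\eps^{\gamma+\beta(N/2-2)}$ from the $L^2$-rescaling is exactly what produces the exponent $\delta = 1+\gamma+\beta(N/2-2)$).

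The main obstacle I expect is step (iii): handling the quadratic form $\langle Lv,v\rangle$ without the non-degeneracy hypothesis. Ordinarily one secures a lower bound $\langle Lv,v\rangle \gtrsim \|v\|_{H^1}^2$ modulo the kernel directions (generated by $\partial_{x_j}U$, $iU$, and the scaling direction), using Weinstein-type coercivity; here, since the profile may be unstable and $L$ may have a genuinely negative direction, that bound fails. The resolution — this is the novelty flagged in the introduction — is that we never try to bound $\|v\|_{H^1}$: we only track $\|v\|_{L^2}$, and we exploit that the negative/degenerate part of $L$ contributes terms that, against the conserved mass constraint and assumption (N2) (which bounds the nonlinear remainder linearly in $\|v\|_{L^2}$), can be absorbed into the $O(\eps^\eta)$ right-hand side over the time interval in question. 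A secondary technical difficulty is the singular case (Vs): one must verify that $\int \nabla V(a+\eps^\beta x)U^2(x)\,dx$ and the Taylor-remainder source terms remain $O(1)$ and $O(\eps^{2\beta})$ respectively uniformly in $t$, which is where the hypothesis $\min_t|a(t)| = \bar a > 0$, the range $\zeta\in(0,2)$, and the condition \eqref{serve3} in dimension $N=3$ enter — \eqref{serve3} being needed to keep the rescaled singular potential term subcritical relative to $\delta$.
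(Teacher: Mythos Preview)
Your proposal takes a substantially more complicated route than the paper and contains a genuine gap in step (iii). You organize the argument around an energy-type Lyapunov functional $\mathcal{L}(t)$ and then must confront the quadratic form $\langle L v, v\rangle$, where $L$ is the linearized operator at $U$. You acknowledge that coercivity is unavailable (the profile may be unstable), and your proposed fix --- ``the negative/degenerate part of $L$ contributes terms that \ldots\ can be absorbed into the $O(\eps^\eta)$ right-hand side'' --- is not a mechanism but a hope: the quadratic form contains $\|\nabla v\|_{L^2}^2$, which is not controlled by $\|v\|_{L^2}$, and (N2) says nothing about this term. Without coercivity modulo a finite-dimensional subspace you cannot close an energy estimate this way.

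The paper avoids this difficulty entirely by never introducing an energy functional. The key observation is that exact mass conservation, $\|\psi(t)\|_{L^2}^2 = \|U_{\sigma(t)}\|_{L^2}^2$, yields the identity
\[
\|w(t,\cdot)\|_{L^2}^2 = 4\eps\,\omega(w, z_{2N+1,\sigma}^\eps) = 4\eps^{2\gamma+\beta N}\,\omega(\tilde w, iU)\,,
\]
so $\partial_t\|w\|_{L^2}^2$ reduces to the \emph{single} symplectic pairing $\omega(\partial_t\tilde w, iU)$. Writing $\partial_t\tilde w = I_1+I_2+I_3+I_4$ as in Proposition~\ref{der-w-tilde}, the linearized-operator piece $I_3 = i\eps^{1-2\beta}\mathcal{L}(\tilde w)$ contributes \emph{exactly zero} to this pairing, because $iU$ is a null direction of $\mathcal{L}$ (phase invariance); likewise $\omega(I_1, iU)=0$ since $I_1$ and $iU$ are both purely imaginary multiples of real functions. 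Only the potential-remainder term $I_2$ and the nonlinear remainder $I_4$ (controlled by (N2)) survive, giving directly $|\partial_t\|w\|_{L^2}| \lesssim \eps^{\delta}$, from which Gronwall finishes. No quadratic form, no coercivity, no almost-symplectic orthogonality is used in the proof of the theorem itself --- the appendix is a separate, supplementary statement, not an ingredient. Your explanation of \eqref{serve3} is also off: it is purely the condition $\delta>0$ rewritten for $N=3$, not a singular-potential subcriticality condition.
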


We now comment on the results of the theorem, in particular with respect to the values of the parameters $\alpha,\beta,\gamma$. We notice that if $\psi(t,x)$ is a solution to \eqref{prob-eps}, then $\tilde \psi(t,x) := \psi(\eps t, \eps x)$ is a solution to
\begin{equation}\label{prob-uno} 
\left\{
\begin{array}{l}
i \tilde \psi_{t} + \triangle \tilde \psi - f(\eps^{-2\alpha}|\tilde\psi|^{2}) \tilde\psi = V(\eps x) \tilde\psi \\[0.2cm]
\tilde \psi(0,x) = \eps^{\gamma} U(\eps^{-\beta+1}x)\, e^{\frac i\eps (\frac 12 x \cdot \xi_{0} + \theta_{0} )} 
\end{array}
\right. \tag{$\mathcal P_{1}$}
\end{equation}
where we set $a_0=0$ for simplicity. Under the same assumptions for $U$, $F$ and $V$ of Theorem \ref{main-result}, we obtain that $\tilde \psi(t,x)$ can still be written as in \eqref{forma}, but now for any fixed $\eta \in (0,\delta)$, the estimate of the error is of the order
\begin{equation}\label{forma2}
\norm{w(t,x)}_{L^{2}} = O(\eps^{\eta-\frac N2})
\end{equation}
for all $t\in (0,\tilde T)$ with $\tilde T= O(\eps^{\eta-\delta-1})$. Hence the time of validity of the approximation has increased by a factor $\eps^{-1}$, but for the estimate \eqref{forma2} to make sense we need that
\begin{equation} \label{aiuto}
\delta > \frac N2 \quad \Leftrightarrow \quad 
\left\{ 
\begin{aligned} 
& \beta < 2\gamma -1\, , & \text{if $N=3$;} \\ 
& \beta \left( \frac N2 -2 \right) + \gamma > \frac N2 -1\, , & \text{if $N\ge 4$.} 
\end{aligned} 
\right.
\end{equation}
Notice in particular that if $\alpha$ is big enough, we can choose $\beta$ and $\gamma$ satisfying \eqref{serve3} and \eqref{aiuto}. So the bigger the enhancement of the nonlinear term the better the approximation of the solution in \eqref{errore}.

Finally, if $\psi(t,x)$ is a solution to \eqref{prob-eps}, then $\eps^{-\alpha} \psi(t,x)$ is a solution to
\begin{equation} \label{prob}
\left\{
\begin{array}{l}
i \eps \psi_{t} + \eps^{2}\triangle \psi - f(|\psi|^{2}) \psi = V(x)\psi \\[0.2cm]
\psi(0,x) = \eps^{\gamma-\alpha} U(\eps^{-\beta}(x-a_{0}))\, e^{\frac i\eps (\frac 12 (x-a_{0}) \cdot \xi_{0} + \theta_{0} )} 
\end{array}
\right. \tag{$\mathcal{P}$}
\end{equation}
Hence under the assumptions of Theorem \ref{main-result}, for any $\eta\in (0,\delta)$ solutions of \eqref{prob} can be written as in \eqref{forma} up to times $T=O(\eps^{\eta-\delta})$.

\subsection{Remarks on the assumptions} We briefly discuss the assumptions on:

\underline{The solitary wave $U$.} The positive function $U$ is the profile of the solitary wave which is the approximation of the solution of \eqref{prob-eps}. Typically one assumes that $U$ is not only solution of \eqref{ellittica} but the minimizer of the energy
\[
\mathcal{E}(u):= \int_{\R^{N}}\, \Big[\frac 12 |\nabla u|^{2} + F(u^{2})\Big] \, dx
\]
constrained to the manifold $\Sigma_{\rho}:= \set{u\in H^{1}\, :\, \norm{u}_{L^{2}}^{2}= \rho}$. This is useful because the minimizer of $\mathcal{E}$ is orbitally stable (see \cite{gss87} and \cite{bbgm}), and is then called soliton. Conditions sufficient for orbital stability are for example assumed in \cite{froh1}, \cite{hz} and \cite{colliding}. In this paper we only assume that $U$ is a solution of \eqref{ellittica}, that is just a critical point for $\mathcal{E}$ constrained to $\Sigma_{\rho}$, and is not necessarily orbitally stable.

For what concerns assumption (C2), we only need $U\in L^{\infty}$ for the case $V$ singular. The speed of vanishing at infinity is verified for example when $U$ is a ground state, in which case $U$ and $\nabla U$ decay exponentially (\cite{bl,kwong}).

\underline{The nonlinearity $f$.} We first discuss (N2). This assumption is used also in \cite{colliding} to which we refer for the proof that (N2) is satisfied if:
\begin{itemize} 
\item $f$ is a Hartree nonlinearity
\[
f(|\psi|^{2})\psi = (W(x) \star |\psi|^{2})\psi
\] 
with $W$ positive, spherically symmetric, in $L^{q}+L^{\infty}$ with $q> \max\set{\frac N2,2}$, and decaying at infinity;

\item $f$ is a local nonlinearity, that is we can write $f:\R^{+}\to \R$ with $f(s) = F'(s)$ for a $C^{3}$ function $F:\R^{+}\to \R$, and
\[
\sup_{s\in \R^{+}}\, s^{\frac{2k-1}{2}}\, f^{(k)}(s) < \infty\, , \qquad k=1,2
\]
\end{itemize}
Assumption (N3) is satisfied by the Hartree nonlinearities as above with $p=1$ and for  and by power local nonlinearities $f(s) = s^{p}$. However we remark that (N3) is not needed if we assume \eqref{par1}.

\underline{The potential $V$.} The assumptions (Vr) and (Vs) on the potential are needed to have local well-posedness for \eqref{prob-eps} by results in \cite[Chapter 4]{caze}. In particular, concerning (Vs2), local well-posedness is implied by $\zeta<2$ for all $N$. It also implies the finiteness of the Hamiltonian \eqref{ham-nls} and the vector field in \eqref{traj} for which $\zeta<N-1$ is sufficient. Notice that in \cite{sing} it was assumed $\zeta<1$.

\section{Hamiltonian formulation for NLS and the trajectories of the solitary waves} \label{sec:ham}
Following \cite{froh1}, we consider the space $H^{1}(\R^{N},\C)$ equipped with the symplectic form
\[
\omega(\psi,\phi) := \Im \int_{\R^{N}}\, \psi\, \bar\phi \, dx
\]
and problem \eqref{prob-eps} associated to the Hamiltonian functional
\begin{equation}\label{ham-nls}
\HH(\psi) := \frac 12 \int_{\R^{N}}\, \Big[ \eps^{2} |\nabla \psi|^{2} + V(x) |\psi|^{2} + \eps^{2\alpha} F(\eps^{-2\alpha} |\psi|^{2}) \Big] dx
\end{equation}
via the law $\eps \psi_{t} = X_{\HH}$, where $X_{\HH}$ is the vector field satisfying
\[
\omega(\phi, X_{\HH}) = d\HH[\phi] \qquad \forall\, \phi\, .
\]
Since the Hamiltonian $\HH$ is not dependent on time, a solution to \eqref{prob-eps} satisfies $\HH(\psi(t,x)) = \HH(\psi(0,x))$ for all $t$. Moreover the Hamiltonian $\HH$ is invariant under the global gauge transformation $\psi \mapsto e^{i\theta} \psi$ for all $\theta \in \R$. This implies that there exists another conserved quantity for the Hamiltonian flow of $\HH$, and it is given by the charge
\[
\Ch(\psi) := \int_{\R^{N}}\, |\psi|^{2}\, dx\, .
\]
Hence, by assumption (C1), it follows that a solution to \eqref{prob-eps} satisfies
\begin{equation}\label{carica-cost}
\norm{\psi(t,x)}_{L^{2}}^{2} = \eps^{2\gamma+\beta N} \norm{U}_{L^{2}}^{2} = \eps^{2\gamma+\beta N} \rho \qquad \forall\, t\, .
\end{equation}

When $V\equiv 0$, we have seen that the solution belongs to the manifold
\begin{equation}\label{man-eps}
\MM_{\eps}:= \set{ U_{\sigma}(x) := \eps^{\gamma} U( \eps^{-\beta}(x-a)) e^{\frac i\eps (\frac 12 (x-a) \cdot \xi + \theta)}\, \Big/ \, \sigma:= (a,\xi,\theta) \in \R^{N}\times \R^{N}\times \R}
\end{equation}
Following \cite{hz}, we construct an Hamiltonian flow associated to $\HH$ on the manifold $\MM_{\eps}$. To this aim we first have to compute $\Omega_{\sigma}$, the restriction of the symplectic form $\omega$ on $\MM_{\eps}$. The tangent space $T_{_{U_{\sigma}}}\MM_{\eps}$ to $\MM_{\eps}$ in a point $U_{\sigma}$ is generated by
\begin{align}
& z_{j,\sigma}^{\eps}(x):= \frac{\partial U_{\sigma}(x)}{\partial a_{j}} = - \Big(\eps^{\gamma-\beta} \partial_{j} U ( \eps^{-\beta}(x-a)) + i\, \frac{\eps^{\gamma-1}}{2}\, \xi_{j} U( \eps^{-\beta}(x-a))\Big) e^{\frac i\eps (\frac 12 (x-a) \cdot \xi + \theta)},\ \  j=1,\dots,N \label{tang-eps-a}\\
& z_{j,\sigma}^{\eps}(x):= \frac{\partial U_{\sigma}(x)}{\partial \xi_{j}} = i\, \frac{1}{2\eps} x_{j-N}\, U_{\sigma}(x)\, ,\qquad j=N+1,\dots, 2N \label{tang-eps-xi}\\
& z_{2N+1,\sigma}^{\eps}(x):= \frac{\partial U_{\sigma}(x)}{\partial \theta} = i\, \frac{1}{2\eps} \, U_{\sigma}(x) \label{tang-eps-theta}
\end{align}
Hence
\[
\Omega_{\sigma} := \omega(z_{i,\sigma}^{\eps},z_{j,\sigma}^{\eps})_{_{1\le i,j \le 2N+1}} = \frac 14 \eps^{2\gamma + \beta N -1} \rho \left( \begin{array}{ccc} 0_{_{N\times N}} & -I_{_{N\times N}} & 0_{_{1\times N}}\\ I_{_{N\times N}} & 0_{_{N\times N}} & 0_{_{1\times N}} \\ 0_{_{N\times 1}} & 0_{_{N\times 1}} & 0 \end{array} \right) = \frac 14 \eps^{2\gamma + \beta N -1} \rho \, d\xi \wedge da
\]
The form $\Omega_{\sigma}$ is degenerate and so $\MM_{\eps}$ is not a symplectic manifold. This is in contrast to \cite{froh1} and \cite{hz} where the soliton manifold was defined also varying the parameter $\omega$ in \eqref{ellittica}. Anyway we use $\Omega_{\sigma}$ to obtain a dynamical system for $\sigma = (a,\xi,\theta)$ associated to the effective Hamiltonian 
\[
\HH_{\MM}(a,\xi,\theta) := \HH(U_{\sigma}) = \frac 18 \eps^{2\gamma+\beta N} \rho |\xi|^{2} + \frac 12 \eps^{2\gamma + \beta N} \int_{\R^{N}} V(a+\eps^{\beta}x) U^{2}(x)\, dx + const(U)\, .
\]
It follows that
\[
\eps \dot \sigma = \frac 4\rho \eps^{-(2\gamma + \beta N -1)} \left( \begin{array}{c} \partial_{\xi} \HH_{\MM} \\ - \partial_{a} \HH_{\MM} \\ 0 \end{array} \right)
\]
hence the system of differential equations
\begin{equation}\label{traj}
\left\{
\begin{aligned}
& \dot a = \xi \\
& \dot \xi = - \frac 2\rho\, \int_{\R^{N}} \nabla V(a+\eps^{\beta}x) U^{2}(x)\, dx\\
& \dot \theta = 0
\end{aligned}
\right.
\end{equation}
For a solution $\sigma(t) = (a(t), \xi(t), \theta(t))$ of \eqref{traj} we introduce the following notation which is needed below:
\begin{equation}\label{vt}
\vv(t) := \frac 1 \rho\,  \int_{\R^{N}} \Big(\nabla V(a(t)+\eps^{\beta}x) -\nabla V(a(t))\Big)U^{2}(x)\, dx
\end{equation}

\section{Approximation of the solution} \label{sec:approx}

In \cite{froh1} and related papers, the main idea was to prove the existence of a unique symplectic decomposition for the solution of \eqref{prob-eps} up to a given time $\tau$. This was achieved by proving that the solution stays for $t\le \tau$ in a small tubular neighbourhood of the symplectic manifold $\MM_{\eps}$, and using the existence of a symplectic projection on $\MM_{\eps}$. In this paper instead we define a particular projection of the solution on the manifold $\MM_{\eps}$, projection which turns out to be ``almost'' symplectic up to some time $\tau$, and show that difference between the solution and the projection is small for $t\le \tau$.

Let $\psi(t,x)$ be the solution of \eqref{prob-eps} and assume that it is defined for all $t\in \R$. Let $\sigma(t)= (a(t), \xi(t), \theta(t))$ be the solution of \eqref{traj} with initial conditions $\sigma_{0}= (a_{0}, \xi_{0}, \theta_{0})$, and $U_{\sigma(t)}$ the element in $\MM_{\eps}$ associated to $\sigma(t)$. Moreover, let $\omega_{\eps}(t)$ be a solution of the Cauchy problem
\begin{equation}\label{cauchy-om}
\left\{ \begin{aligned} & \dot \omega_{\eps}(t) = \eps^{2-2\beta} \omega - \frac 14 |\xi(t)|^{2}+ V(a(t)) \\ & \omega_{\eps}(0) = 0
\end{aligned}
\right.
\end{equation}
Notice that in the statement of Theorem \ref{main-result} we use the notation $\vartheta(t) = \theta(t) - \omega_{\eps}(t)$.

Then we define
\begin{equation}\label{w}
w(t,x) := e^{\frac i\eps\, \omega_{\eps}(t)}\, \psi(t,x) - U_{\sigma(t)}(x)\, .
\end{equation}
and using $\tilde x = \eps^{-\beta}(x-a(t))$
\begin{equation}\label{w-tilde}
\begin{aligned}
\tilde w(t,\tilde x) := & \eps^{-\gamma} e^{-\frac i\eps (\frac 12 \eps^{\beta}\tilde x \cdot \xi(t) + \theta(t))}\, w(t,a(t) + \eps^{\beta}\tilde x)\\ = & \eps^{-\gamma} e^{-\frac i\eps (\frac 12 \eps^{\beta}\tilde x \cdot \xi(t) + \theta(t) - \omega_{\eps}(t))}\, \psi(t,a(t) + \eps^{\beta}\tilde x) - U(\tilde x)
\end{aligned}
\end{equation}
The functions $w(t,x)$ and $\tilde w(t,\tilde x)$ represent the distance between the solution and the solitary wave, solution with $V\equiv 0$, in the moving and in the fixed space-time frame respectively. Recall from \eqref{tang-eps-a}-\eqref{tang-eps-theta} that the tangent space to the soliton manifold $\MM_{\eps}$ with $\eps=1$ at $\sigma=0$ is generated by
\begin{align}
&z_{j,0}(x) = -\partial_{j}U(x)\, , \qquad j=1,\dots,N \label{tang-a-0}\\
&z_{j,0}(x) = i \frac 12\, x_{j-N} U(x)\, , \qquad j=N+1,\dots,2N \label{tang-xi-0}\\
&z_{2N+1,0}(x) = i U(x)\, . \label{tang-theta-0}
\end{align}
Then
\begin{lemma}\label{sympl-w-tilde-w}
For all $t\in \R$ and all $\sigma=(a,\xi,\theta) \in \R^{2N+1}$, we have
\[
\omega( w, z_{j,\sigma}^{\eps}) = \left\{
\begin{aligned}
& \eps^{2\gamma+\beta (N-1)}\, \omega( \tilde w, z_{j,0}) - \frac 12 \eps^{2\gamma + \beta N-1} \, \xi_{j}(t)\, \omega( \tilde w, z_{2N+1,0})\, , & j=1,\dots,N \\
& \eps^{2\gamma+\beta (N+1)-1}\, \omega( \tilde w, z_{j,0}) + \frac 12 \eps^{2\gamma + \beta N-1} \, a_{j-N}(t)\, \omega( \tilde w, z_{2N+1,0})\, , & j=N+1,\dots,2N\\ 
& \eps^{2\gamma+\beta N-1}\, \omega( \tilde w, z_{j,0})\, , & j=2N+1
\end{aligned}
\right.
\]
\end{lemma}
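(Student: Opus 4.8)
The plan is to establish the three identities by a direct computation; each amounts to a rescaling together with careful bookkeeping of powers of $\eps$ and of signs, and I do not anticipate a genuine conceptual obstacle. Fix $t$ and take $\sigma=\sigma(t)=(a(t),\xi(t),\theta(t))$, consistently with the base point used in \eqref{w-tilde} and with the $a(t),\xi(t)$ appearing on the right-hand side; abbreviate $a=a(t)$, $\xi=\xi(t)$, $\theta=\theta(t)$. All pairings are $\omega(f,g)=\Im\int_{\R^N}f\,\bar g\,dx$, and I would evaluate them by the substitution $\tilde x=\eps^{-\beta}(x-a)$ (so $x=a+\eps^\beta\tilde x$, $dx=\eps^{\beta N}\,d\tilde x$) underlying \eqref{w-tilde}, under which $w(t,a+\eps^\beta\tilde x)=\eps^\gamma e^{\frac i\eps(\frac12\eps^\beta\tilde x\cdot\xi+\theta)}\,\tilde w(t,\tilde x)$. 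By \eqref{tang-eps-a}--\eqref{tang-eps-theta}, each $z^{\eps}_{j,\sigma}$ evaluated at $x=a+\eps^\beta\tilde x$ equals this same plane wave $e^{\frac i\eps(\frac12\eps^\beta\tilde x\cdot\xi+\theta)}$ times a function of $\tilde x$ built from $U$, $\partial_k U$ and $\tilde x_k U$, so in $\omega(w,z^{\eps}_{j,\sigma})$ the phase carried by $w$ cancels exactly against the conjugate phase of $z^{\eps}_{j,\sigma}$. After this cancellation the integrand is, in each of the three cases, a fixed power of $\eps$ times a product of $\tilde w$ with one of $U$, $\partial_j U$, $\tilde x_{j-N}U$; taking imaginary parts and using, from \eqref{tang-a-0}--\eqref{tang-theta-0}, that $\omega(\tilde w,z_{j,0})=-\Im\int\tilde w\,\partial_j U\,d\tilde x$ for $j\le N$, $\omega(\tilde w,z_{j,0})=-\tfrac12\Re\int\tilde x_{j-N}\,\tilde w\,U\,d\tilde x$ for $N+1\le j\le 2N$, and $\omega(\tilde w,z_{2N+1,0})=-\Re\int\tilde w\,U\,d\tilde x$, the three displayed formulas come out.

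Going through the cases: for $j=1,\dots,N$ one uses \eqref{tang-eps-a}; the $\partial_j U$-part of $z^{\eps}_{j,\sigma}$ carries $\eps^{\gamma-\beta}$, which together with the Jacobian $\eps^{\beta N}$ and the $\eps^\gamma$ from $w$ produces the prefactor $\eps^{2\gamma+\beta(N-1)}$ of $\omega(\tilde w,z_{j,0})$, while the boost part $\propto\xi_j U$ carries $\eps^{\gamma-1}$ and contributes $-\tfrac12\eps^{2\gamma+\beta N-1}\xi_j(t)\,\omega(\tilde w,z_{2N+1,0})$. For $j=N+1,\dots,2N$ one uses \eqref{tang-eps-xi}; the extra point is that the coordinate $x_{j-N}$ occurring there must be written, after the substitution, as $a_{j-N}(t)+\eps^\beta\tilde x_{j-N}$, whereupon the piece $\eps^\beta\tilde x_{j-N}$ yields $\eps^{2\gamma+\beta(N+1)-1}\omega(\tilde w,z_{j,0})$ and the piece $a_{j-N}(t)$ yields the correction $\tfrac12\eps^{2\gamma+\beta N-1}a_{j-N}(t)\,\omega(\tilde w,z_{2N+1,0})$. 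For $j=2N+1$ one uses \eqref{tang-eps-theta}; there is no coordinate factor and one obtains at once $\eps^{2\gamma+\beta N-1}\omega(\tilde w,z_{2N+1,0})$. Assembling the three cases gives the statement.

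The only points needing attention are mechanical: the Jacobian $\eps^{\beta N}$; the fact that $\partial_j U$ rescales with $\eps^{-\beta}$ whereas the momentum and phase parts of the $a_j$- and $\theta$-derivatives rescale with $\eps^{-1}$, which is exactly what makes the three $\eps$-powers differ; the signs produced when $\Im$ is applied to purely imaginary multiples; and the splitting $x_{j-N}=a_{j-N}(t)+\eps^\beta\tilde x_{j-N}$, which at the same time explains the $\eps^\beta$ gap between the two terms of the second line and is the source of the $\omega(\tilde w,z_{2N+1,0})$ correction there. The analogous correction in the first line is just the image, in the rescaled frame, of the boost term $\propto\xi_j U$ inside $z^{\eps}_{j,\sigma}$ coming from the dependence of the phase on $a_j$.
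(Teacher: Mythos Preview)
Your proposal is correct and follows essentially the same approach as the paper: a change of variables $\tilde x=\eps^{-\beta}(x-a(t))$ together with the phase cancellation coming from \eqref{w-tilde}, after which each $z^{\eps}_{j,\sigma}$ in the rescaled frame is identified as a linear combination of the $z_{j,0}$'s with the stated powers of $\eps$. The paper organizes this by first deriving the unified identity $\omega(w,z^{\eps}_{j,\sigma})=\eps^{\gamma+\beta N}\,\omega\big(\tilde w,\,e^{-\frac i\eps(\frac12\eps^\beta\tilde x\cdot\xi(t)+\theta(t))}z^{\eps}_{j,\sigma}(a(t)+\eps^\beta\tilde x)\big)$ and then computing the second slot case by case, which is exactly your procedure.
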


\begin{proof}
First of all, by standard manipulations, for all $j=1,\dots,2N+1$
\[
\omega( w(t,x), z_{j,\sigma}^{\eps}(x)) = \Im \int_{\R^{N}}\, w(t,x)\, \overline{z_{j,\sigma}^{\eps} (t,x)}\, dx =\eps^{\beta N} \Im \, \int_{\R^{N}}\, w(t,a(t)+\eps^{\beta}\tilde x)\, \overline{z_{j,\sigma}^{\eps} (t,a(t)+\eps^{\beta}\tilde x)}\, d\tilde x =
\]
\[
= \eps^{\beta N} \Im \, \int_{\R^{N}}\, e^{-\frac i\eps (\frac 12 \eps^{\beta}\tilde x \cdot \xi(t) + \theta(t))} w(t,a(t)+\eps^{\beta}\tilde x)\ \overline{e^{-\frac i\eps (\frac 12 \eps^{\beta}\tilde x \cdot \xi(t) + \theta(t))} z_{j,\sigma}^{\eps} (t,a(t)+\eps^{\beta}\tilde x)}\, d\tilde x =
\]
\[
= \eps^{\gamma + \beta N}\, \omega\Big(\tilde w(t,\tilde x), e^{-\frac i\eps (\frac 12 \eps^{\beta}\tilde x \cdot \xi(t) + \theta(t))} z_{j,\sigma}^{\eps} (t,a(t)+\eps^{\beta}\tilde x)\Big)\, ,
\]
where in the last equality we have used \eqref{w-tilde}. Moreover, using \eqref{tang-eps-a}-\eqref{tang-eps-theta} and \eqref{tang-a-0}-\eqref{tang-theta-0}, we have
\[
e^{-\frac i\eps (\frac 12 \eps^{\beta}\tilde x \cdot \xi(t) + \theta(t))} z_{j,\sigma}^{\eps} (t,a(t)+\eps^{\beta}\tilde x) = \left\{
\begin{aligned}
& \eps^{\gamma-\beta} z_{j,0}(\tilde x) - \frac 12 \eps^{\gamma-1}\, \xi_{j}(t) z_{2N+1,0}(\tilde x)\, , & j=1,\dots,N\\
& \eps^{\gamma+\beta-1} z_{j,0}(\tilde x) + \frac 12 \eps^{\gamma-1}\, a_{j-N}(t) z_{2N+1,0}(\tilde x)\, , & j=N+1,\dots,2N\\
& \eps^{\gamma -1}\, z_{j,0}(\tilde x)\, , & j= 2N+1
\end{aligned}
\right.
\]
and the proof is finished.
\end{proof}

We first study the evolution in time of the function $\tilde w(t,\tilde x)$.

\begin{proposition}\label{der-w-tilde}
Let $\tilde w(t,\tilde x)$ be defined as in \eqref{w-tilde} for all $t\in \R$, then
\begin{align*}
\partial_{t} \tilde w (t,\tilde x) = & \frac i \eps \Big( \eps^{\beta}\tilde x \cdot \vv(t) - \mathcal{R}_{V}(t,\tilde x)\Big) \, \Big(U(\tilde x) + \tilde w(t,\tilde x)\Big) +\\
& + i \eps^{1-2\beta} \Big[ \triangle \tilde w(t,\tilde x) + \omega\, \tilde w(t,\tilde x) - \Big( 2 f'(U^{2}(\tilde x)) U^{2}(\tilde x) + f(U^{2}(\tilde x)) \Big) \Re (\tilde w(t,\tilde x)) + \\
& - i\, f(U^{2}(\tilde x)) \Im (\tilde w(t,\tilde x))- \mathcal{R}_{F}(t,\tilde x)\Big] 
\end{align*} 
where $\vv(t)$ is defined in \eqref{vt} and
\begin{align*}
& \mathcal{R}_{V}(t,\tilde x):= V(a(t)+\eps^{\beta}\tilde x) -V(a(t)) - \eps^{\beta}\tilde x \cdot \nabla V(a(t))\\
& \mathcal{R}_{F}(t,\tilde x):= f(|U(\tilde x) + \tilde w(t,\tilde x)|^{2})\Big( U(\tilde x) + \tilde w(t,\tilde x) \Big) - f(U^{2}(\tilde x))U(\tilde x) +\\
&  - \Big( 2 f'(U^{2}(\tilde x)) U^{2}(\tilde x) + f(U^{2}(\tilde x)) \Big) \Re (\tilde w(t,\tilde x)) - i\, f(U^{2}(\tilde x)) \Im (\tilde w(t,\tilde x))
\end{align*}
\end{proposition}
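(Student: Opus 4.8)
The plan is to compute $\partial_t \tilde w$ directly from its definition in \eqref{w-tilde} and substitute the PDE \eqref{prob-eps} for the time derivative of $\psi$. Writing $\tilde w(t,\tilde x) = \eps^{-\gamma} e^{-\frac i\eps \phi(t,\tilde x)}\psi(t, a(t)+\eps^\beta \tilde x) - U(\tilde x)$ with phase $\phi(t,\tilde x) := \frac 12 \eps^\beta \tilde x\cdot\xi(t) + \theta(t) - \omega_\eps(t)$, I would differentiate under the integral/pointwise in $t$, being careful that $\tilde x$ is held fixed (so the $x$-argument $a(t)+\eps^\beta\tilde x$ moves with $\dot a=\xi$). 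This produces three groups of terms: (i) the derivative hitting the exponential, which brings down $-\frac i\eps \dot\phi = -\frac i\eps(\frac 12\eps^\beta \tilde x\cdot\dot\xi + \dot\theta - \dot\omega_\eps)$; (ii) the derivative hitting the $t$-slot of $\psi$, which is $\psi_t$ evaluated at $a(t)+\eps^\beta\tilde x$; and (iii) the derivative hitting the $x$-slot of $\psi$ through $\dot a=\xi$, giving $\xi(t)\cdot(\nabla\psi)(t,a(t)+\eps^\beta\tilde x)$.

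Next I would eliminate $\psi_t$ using \eqref{prob-eps}: $\eps\psi_t = i\eps^2\triangle\psi - i f(\eps^{-2\alpha}|\psi|^2)\psi - i V\psi$, evaluated at $a(t)+\eps^\beta\tilde x$, and then rewrite everything in terms of $\tilde w$ by inverting \eqref{w-tilde}, i.e. $\psi(t,a(t)+\eps^\beta\tilde x) = \eps^\gamma e^{\frac i\eps\phi}(U(\tilde x)+\tilde w)$. The Laplacian in $x$ becomes $\eps^{-2\beta}$ times the Laplacian in $\tilde x$ acting on $e^{\frac i\eps\phi}(U+\tilde w)$; expanding the product rule generates the free Schrödinger part $i\eps^{1-2\beta}\triangle\tilde w$, cross terms with $\nabla\phi = \frac 12\eps^\beta\xi$ that must cancel against the transport term (iii) and against pieces of $\dot\phi$, and a $|\nabla\phi|^2$ term $\sim \frac 14|\xi|^2$ that combines with the $-\dot\theta$ contribution. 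Using the definition \eqref{cauchy-om} of $\omega_\eps$, namely $\dot\omega_\eps = \eps^{2-2\beta}\omega - \frac 14|\xi|^2 + V(a(t))$, and $\dot\theta=0$ from \eqref{traj}, the surviving scalar multiples of $(U+\tilde w)$ reduce exactly to $\frac i\eps(\eps^\beta\tilde x\cdot\vv(t) - \mathcal R_V(t,\tilde x))$ after also using $\dot\xi = -\frac 2\rho\int\nabla V(a+\eps^\beta x)U^2 dx$ and the definition \eqref{vt} of $\vv$; the $-\frac 12\eps^\beta\tilde x\cdot\dot\xi$ piece is precisely what promotes $\nabla V(a(t))$ in $\mathcal R_V$ to the averaged gradient, leaving the $\vv(t)$ correction. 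Finally, the elliptic equation \eqref{ellittica}, $-\triangle U + f(U^2)U - \omega U = 0$, is used to absorb the $U$-only contributions of the nonlinear and $\omega$ terms, so that what remains of the nonlinearity acting on $U+\tilde w$, minus its linearization around $U$, is exactly $\mathcal R_F(t,\tilde x)$, and the $\eps^{1-2\beta}$-bracket takes the stated form with the linearized operator $\triangle + \omega - (2f'(U^2)U^2+f(U^2))\Re - i f(U^2)\Im$ applied to $\tilde w$.

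The main obstacle is bookkeeping rather than any deep idea: one must keep track of the precise powers of $\eps$ (the interplay of $\eps^{2\beta}$ from the Laplacian, $\eps^\beta$ from the spatial rescaling, and $\eps^{1-2\beta}$, $\eps^{\beta-1}$ etc. from the phase) and verify that all the "unwanted" terms — in particular the first-order-in-$\nabla\tilde w$ terms coming from $e^{\frac i\eps\phi}$ and the transport term, and the scalar $\dot\theta$, $\dot\omega_\eps$, $|\xi|^2$, $V(a(t))$ pieces — cancel or reorganize exactly as claimed. The one spot that needs genuine care is the homogeneity/scaling of the nonlinear term: $f(\eps^{-2\alpha}|\psi|^2)$ with $|\psi|^2 = \eps^{2\gamma}|U+\tilde w|^2$ gives $f(\eps^{2(\gamma-\alpha)}|U+\tilde w|^2)$, and here assumptions (N3)–(N4) ($f$ homogeneous of degree $p$, $\beta = 1+(\alpha-\gamma)p$) are what make this equal to $\eps^{(\gamma-\alpha)p}f(|U+\tilde w|^2) = \eps^{\beta-1}f(|U+\tilde w|^2)$, so that after multiplying by the overall $i\eps^{1-2\beta}$ one gets exactly $i\eps^{1-2\beta}$ in front of $f(|U+\tilde w|^2)(U+\tilde w)$ and its linearization — consistently with how $\omega$ enters (recall $\theta(t)=(\frac 14|\xi_0|^2 - \eps^{2-2\beta}\omega)t$ in the $V\equiv 0$ derivation). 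Once these cancellations are checked, collecting terms yields the formula verbatim.
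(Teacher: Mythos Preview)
Your approach is correct and essentially identical to the paper's: the paper writes $\psi = \eps^{\gamma} g(t,x)\,(U+\tilde w)$ with $g(t,x)=e^{\frac i\eps(\frac12(x-a)\cdot\xi+\theta-\omega_\eps)}$, differentiates to isolate $\partial_t\tilde w$, substitutes \eqref{prob-eps}, passes to the variable $\tilde x=\eps^{-\beta}(x-a(t))$, and then invokes \eqref{traj}, \eqref{cauchy-om}, \eqref{ellittica}, and (N3)--(N4) in exactly the order you describe. One small correction to your exponent bookkeeping in the nonlinearity: homogeneity gives $f(\eps^{2(\gamma-\alpha)}|U+\tilde w|^{2}) = \eps^{2(\gamma-\alpha)p}\,f(|U+\tilde w|^{2}) = \eps^{2-2\beta}\,f(|U+\tilde w|^{2})$ (not $\eps^{\beta-1}$), and it is this factor together with the $-\tfrac{i}{\eps}$ coming from the PDE that produces the $i\eps^{1-2\beta}$ prefactor on the nonlinear bracket.
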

\begin{proof}
From \eqref{w-tilde}, we have
\[
\psi(t,x) = \eps^{\gamma}\, e^{\frac i\eps (\frac 12 (x-a(t)) \cdot \xi(t) + \theta(t) - \omega_{\eps}(t))}\, \Big(U(\eps^{-\beta}(x-a(t))) + \tilde w(t, \eps^{-\beta}(x-a(t)))\Big)
\]
hence, using the notation
\[
g(t,x) := e^{\frac i\eps (\frac 12 (x-a(t)) \cdot \xi(t) + \theta(t) - \omega_{\eps}(t))}
\]
we have
\begin{align*}
\partial_{t} \psi(t,x) = & \frac i \eps\, \Big(-\frac 12 \dot a(t)\cdot \xi(t) + \frac 12 (x-a(t)) \cdot \dot \xi(t) + \dot \theta(t) - \dot \omega_{\eps}(t)\Big) \psi(t,x) + \\ 
& + \eps^{\gamma}\, g(t,x)\, \Big[ \partial_{t} \tilde w (t, \eps^{-\beta}(x-a(t))) - \eps^{-\beta} \dot a(t) \cdot \Big(\nabla U (\eps^{-\beta}(x-a(t))) + \nabla \tilde w(t, \eps^{-\beta}(x-a(t)))\Big) \Big]
\end{align*}
Hence
\begin{align*}
\partial_{t} \tilde w (t, \eps^{-\beta}(x-a(t))) = & \eps^{-\gamma} g^{-1}(t,x) \partial_{t} \psi(t,x) + \eps^{-\beta} \dot a(t) \cdot \Big(\nabla U (\eps^{-\beta}(x-a(t))) + \nabla \tilde w(t, \eps^{-\beta}(x-a(t)))\Big)+\\
& + \frac i \eps\, \Big(\frac 12 \dot a(t)\cdot \xi(t) - \frac 12 (x-a(t)) \cdot \dot \xi(t) - \dot \theta(t) + \dot \omega_{\eps}(t)\Big) \eps^{-\gamma} g^{-1}(t,x) \psi(t,x)
\end{align*}
At this point we use that $\psi(t,x)$ is a solution of \eqref{prob-eps} and change variable $\tilde x = \eps^{-\beta}(x-a(t))$, to obtain
\begin{align*}
\partial_{t} \tilde w (t, \tilde x) = & -\frac i \eps \Big( V(a(t) + \eps^{\beta}\tilde x) + f(\eps^{2\gamma-2\alpha} |U(\tilde x) + \tilde w(t,\tilde x)|^{2}) \Big)\, \Big( U(\tilde x) + \tilde w(t,\tilde x) \Big) +\\
& + i \eps^{1-2\beta} \Big( \triangle U(\tilde x) + \triangle \tilde w(t,\tilde x)\Big) - \frac i 4 \eps^{-1} |\xi(t)|^{2} \Big( U(\tilde x) + \tilde w(t,\tilde x) \Big) +\\
& + \eps^{-\beta} \dot a(t) \cdot \Big(\nabla U (\tilde x) + \nabla \tilde w(t, \tilde x)\Big)- \eps^{-\beta}\xi(t) \cdot \Big( \nabla U(\tilde x) + \nabla \tilde w(t,\tilde x) \Big) + \\
& + \frac i \eps\, \Big(\frac 12 \dot a(t)\cdot \xi(t) - \frac 12 \eps^{\beta}\tilde x \cdot \dot \xi(t) - \dot \theta(t) + \dot \omega_{\eps}(t)\Big) \Big( U(\tilde x) + \tilde w(t,\tilde x) \Big)
\end{align*}
We now use that $\sigma(t)=(a(t),\xi(t),\theta(t))$ is a solution of \eqref{traj} and $\omega_{\eps}(t)$ satisfies \eqref{cauchy-om} to see that
\[
\frac 12 \dot a(t)\cdot \xi(t) - \dot \theta(t) + \dot \omega_{\eps}(t) -\frac 14 |\xi(t)|^{2} = \eps^{2-2\beta}\, \omega + V(a(t))
\]
Moreover using that $U$ solves \eqref{ellittica} we have
\begin{align*}
\partial_{t} \tilde w (t, \tilde x) = & \frac i \eps \Big( \frac 1\rho \eps^{\beta}\tilde x\cdot \int_{\R^{N}}\, \nabla V (a(t) + \eps^{\beta} y) U^{2}( y)\, dy + V(a(t)) - V(a(t)+\eps^{\beta}\tilde x)\Big) \Big( U(\tilde x) + \tilde w(t,\tilde x) \Big) +\\
& + \frac i \eps \Big( \eps^{2-2\beta} f(U^{2}(\tilde x))U(\tilde x) - f(\eps^{2\gamma-2\alpha} |U(\tilde x) + \tilde w(t,\tilde x)|^{2})\Big( U(\tilde x) + \tilde w(t,\tilde x) \Big)\Big)+\\
& + i\eps^{1-2\beta} \Big( \triangle \tilde w(t,\tilde x) + \omega\, \tilde w(t,\tilde x)\Big)
\end{align*}
Finally, in the first line we use
\[
\frac 1\rho \eps^{\beta}\tilde x\cdot \int_{\R^{N}}\, \nabla V (a(t) + \eps^{\beta} y) U^{2}( y)\, dy + V(a(t)) - V(a(t)+\eps^{\beta}\tilde x) = \eps^{\beta}\tilde x \cdot \vv(t) - \mathcal{R}_{V}(t,\tilde x)
\]
Then in the second line we write
\begin{align*}
& f(\eps^{2\gamma-2\alpha} |U(\tilde x) + \tilde w(t,\tilde x)|^{2})\Big( U(\tilde x) + \tilde w(t,\tilde x) \Big) =  f(\eps^{2\gamma-2\alpha} U^{2}(\tilde x))U(\tilde x) +\\
&  + \Big[ 2 \eps^{2\gamma-2\alpha}  f'(\eps^{2\gamma-2\alpha} U^{2}(\tilde x)) U^{2}(\tilde x) + f(\eps^{2\gamma-2\alpha} U^{2}(\tilde x)) \Big] \Re(\tilde w(t,\tilde x)) + i\, f(\eps^{2\gamma-2\alpha} U^{2}(\tilde x)) \Im(\tilde w(t,\tilde x)) + \\
& + r_{_{F}}(t,\tilde x)
\end{align*}
where $r_{_{F}}(t,\tilde x)$ is defined by this equality. Using now assumption (N3) we have
\begin{align*}
& f(\eps^{2\gamma-2\alpha} U^{2}(\tilde x))U(\tilde x) = \eps^{2(\gamma-\alpha)p}\, f(U^{2}(\tilde x))U(\tilde x)\\
& \eps^{2\gamma-2\alpha}  f'(\eps^{2\gamma-2\alpha} U^{2}(\tilde x)) = \eps^{2(\gamma-\alpha)p}\, f'(U^{2}(\tilde x))
\end{align*}
and by assumption (N4) $2-2\beta = 2(\gamma-\alpha)p$. Hence
\begin{align*}
& \eps^{2-2\beta} f(U^{2}(\tilde x))U(\tilde x) - f(\eps^{2\gamma-2\alpha} |U(\tilde x) + \tilde w(t,\tilde x)|^{2})\Big( U(\tilde x) + \tilde w(t,\tilde x) \Big) = \\
& = - \eps^{2-2\beta} \Big[ \Big(2 f'(U^{2}(\tilde x)) U^{2}(\tilde x) + f(U^{2}(\tilde x)) \Big) \Re(\tilde w(t,\tilde x)) + i\, f(U^{2}(\tilde x)) \Im(\tilde w(t,\tilde x)) + \mathcal{R}_{F}(t,\tilde x)\Big]
\end{align*}
and the proof is finished.
\end{proof}

We now use Lemma \ref{sympl-w-tilde-w} and Proposition \ref{der-w-tilde} to estimate the growth of the function $w(t,x)$ in $L^{2}$ norm.

\begin{theorem}\label{main-part-1}
Let $\psi(t,x)$ be the solution of \eqref{prob-eps} assumed to be defined for all $t\in \R$. Let $\sigma(t)= (a(t), \xi(t), \theta(t))$ be the solution of \eqref{traj} with initial conditions $\sigma_{0}= (a_{0}, \xi_{0}, \theta_{0})$, and $U_{\sigma(t)}$ the element in $\MM_{\eps}$ associated to $\sigma(t)$. Finally let $\omega_{\eps}(t)$ be the solution of the Cauchy problem \eqref{cauchy-om}. If the function $w(t,x)$ defined in \eqref{w} satisfies $\norm{w(t,\cdot)}_{L^{2}}\le 1$ for $t\in (0, \tau)$ then
\begin{equation}\label{stima-princ-1}
\Big| \partial_{t} \norm{w(t,\cdot)}_{L^{2}}^{2} \Big| \le 4\, \eps^{\gamma +\beta \frac N2}\, \norm{w(t,\cdot)}_{L^{2}} \Big( \frac 12 \eps^{\beta-1} \norm{|x| U(x)}_{L^{2}}\, |\vv(t)| + \eps^{-1} \norm{|\mathcal{R}_{V}(t,x)|\, U(x)}_{L^{2}} + \eps^{1-2\beta}\, C(z_{j,0},U)\Big)
\end{equation}
for all $t\in (0, \tau)$, where $\vv(t)$ is defined in \eqref{vt}, $\mathcal{R}_{V}(t,x)$ is as in Proposition \ref{der-w-tilde}, and $C(z_{j,0},U)$ is defined in (N2).
\end{theorem}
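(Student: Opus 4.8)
The plan is to differentiate $\norm{w}_{L^{2}}^{2}$ directly, pass to the rescaled perturbation $\tilde w$, insert the evolution equation of Proposition~\ref{der-w-tilde}, and check that the ``conservative'' part of that equation contributes nothing, so that only the potential perturbation and the nonlinear remainder survive. First I would use \eqref{w-tilde} and the change of variables $\tilde x=\eps^{-\beta}(x-a(t))$ to get $\norm{w(t,\cdot)}_{L^{2}}^{2}=\eps^{2\gamma+\beta N}\norm{\tilde w(t,\cdot)}_{L^{2}}^{2}$, so that
\[
\partial_{t}\norm{w}_{L^{2}}^{2}=2\,\eps^{2\gamma+\beta N}\,\Re\int_{\R^{N}}\overline{\tilde w}\,\partial_{t}\tilde w\,d\tilde x ;
\]
the hypothesis $\norm{w}_{L^{2}}\le 1$ on $(0,\tau)$ is what guarantees we are in a regime where Proposition~\ref{der-w-tilde} and assumption (N2) apply.

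Before estimating I would undo the linearization of the nonlinearity carried out in Proposition~\ref{der-w-tilde}: by the very definition of $\mathcal R_{F}$, the three terms $-\big(2f'(U^{2})U^{2}+f(U^{2})\big)\Re\tilde w-i\,f(U^{2})\Im\tilde w-\mathcal R_{F}$ add up to $f(U^{2})U-f(|U+\tilde w|^{2})(U+\tilde w)$, so
\[
\partial_{t}\tilde w=\frac i\eps\big(\eps^{\beta}\tilde x\cdot\vv(t)-\mathcal R_{V}(t,\tilde x)\big)(U+\tilde w)+i\,\eps^{1-2\beta}\Big[\triangle\tilde w+\omega\,\tilde w+f(U^{2})U-f(|U+\tilde w|^{2})(U+\tilde w)\Big].
\]
Pairing with $\overline{\tilde w}$ and taking $2\Re\int$, two pieces vanish identically: the Laplacian part, $2\Re\int\overline{\tilde w}\,i\,\eps^{1-2\beta}(\triangle+\omega)\tilde w=0$, because $\triangle+\omega$ is real and self-adjoint, so $\int\overline{\tilde w}(\triangle+\omega)\tilde w\in\R$ and multiplication by $i$ makes it purely imaginary; and the $\tilde w\,\overline{\tilde w}=|\tilde w|^{2}$ part of the potential factor, which is a real scalar times $i|\tilde w|^{2}$, again purely imaginary.

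What is left from the potential term is $\tfrac2\eps\int(\eps^{\beta}\tilde x\cdot\vv(t)-\mathcal R_{V}(t,\tilde x))\,U\,\Im\tilde w\,d\tilde x$ (using $\Re(iU\overline{\tilde w})=U\,\Im\tilde w$), which Cauchy--Schwarz bounds by $\tfrac2\eps\big(\eps^{\beta}|\vv(t)|\,\norm{|x|U}_{L^{2}}+\norm{|\mathcal R_{V}(t,\cdot)|U}_{L^{2}}\big)\norm{\tilde w}_{L^{2}}$. What is left from the nonlinear term is $2\,\eps^{1-2\beta}\Re\int\overline{\tilde w}\,i\big(f(U^{2})U-f(|U+\tilde w|^{2})(U+\tilde w)\big)$; writing $\overline{\tilde w}=\overline{U+\tilde w}-U$ and discarding the real-valued terms $f(|U+\tilde w|^{2})|U+\tilde w|^{2}$ and $f(U^{2})U^{2}$, this collapses to $-2\,\eps^{1-2\beta}\int U\,\big[f(|U+\tilde w|^{2})-f(U^{2})\big]\Im\tilde w\,d\tilde x$. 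A short computation identifies this last integral with $\Im\int\mathcal R_{F}(t,\tilde x)\,U(\tilde x)\,d\tilde x$ (the linear-in-$\tilde w$ terms inside $\mathcal R_{F}$ contribute $0$ or a cancelling shift when paired with the real $U$). Since $U\in L^{r}$ for every $r\in(2,\tfrac{2N}{N-2})$ by (C1)--(C2), and $U$ is proportional to the tangent vector $z_{2N+1,0}=iU$, assumption (N2) with $v=\tilde w$ bounds $|\int\mathcal R_{F}\,\overline{U}|$ by $C(z_{2N+1,0},U)\,\norm{\tilde w}_{L^{2}}$. Adding the two surviving pieces, multiplying by $\eps^{2\gamma+\beta N}$ and substituting $\norm{\tilde w}_{L^{2}}=\eps^{-\gamma-\beta N/2}\norm{w}_{L^{2}}$ yields \eqref{stima-princ-1}, with room in the constant $4$.

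The step I expect to be the crux is the recombination in the second paragraph. If one instead kept the linearized operator $\triangle+\omega-(2f'(U^{2})U^{2}+f(U^{2}))\Re(\cdot)-if(U^{2})\Im(\cdot)$ and estimated $2\Re\int\overline{\tilde w}\,i(\cdot)$ applied to it, one would only get $-4\int f'(U^{2})U^{2}\,\Re\tilde w\,\Im\tilde w$, which is of size $O(\norm{\tilde w}_{L^{2}}^{2})$ and hence useless here, since $\norm{\tilde w}_{L^{2}}$ is a rescaled norm and need not be small. It is only after the recombination that the surviving nonlinear quantity becomes a pairing of the remainder $\mathcal R_{F}$ against the \emph{fixed} profile $U$, which is precisely the object that (N2) was designed to control linearly in $\norm{\tilde w}_{L^{2}}$ — and this is what lets the whole $L^{2}$-only scheme go through without any non-degeneracy assumption on $U$. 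A minor point, dealt with via the homogeneity (N3), is that $\norm{\tilde w}_{L^{2}}$ need not obey the normalization $\le 1$ built into (N2); everything else (the imaginary-part expansions and the Cauchy--Schwarz estimates) is routine.
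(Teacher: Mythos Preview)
Your argument is correct and lands on exactly the same estimate, but it is organized differently from the paper's proof. The paper first exploits conservation of charge, $\norm{\psi(t)}_{L^2}^2=\norm{U_{\sigma(t)}}_{L^2}^2$, to rewrite $\partial_t\norm{w}_{L^2}^2=4\eps^{2\gamma+\beta N}\,\omega(\partial_t\tilde w,z_{2N+1,0})$, so that from the outset one is pairing $\partial_t\tilde w$ against the \emph{fixed} vector $iU$; the decomposition $\partial_t\tilde w=I_1+I_2+I_3+I_4$ of the appendix then gives $\omega(I_1,z_{2N+1,0})=\omega(I_3,z_{2N+1,0})=0$ (Lemmas~\ref{i1} and~\ref{i3}) and bounds $I_2,I_4$ (Lemmas~\ref{i2} and~\ref{i4}). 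You instead pair $\partial_t\tilde w$ against $\overline{\tilde w}$, recombine the linearized nonlinearity back into $f(U^2)U-f(|U+\tilde w|^2)(U+\tilde w)$, and then unwind via $\overline{\tilde w}=\overline{U+\tilde w}-U$ to reach the same pairing of $\mathcal R_F$ against $U$, so that (N2) can be invoked. Your route is a shade more elementary and self-contained (no symplectic language, no charge conservation step), while the paper's route is shorter once one has the conservation law, and has the side benefit that Lemmas~\ref{i1}--\ref{i4} are reused for the ``almost symplectic'' Proposition~\ref{crescita}. The minor imprecision you flag about $\norm{\tilde w}_{L^2}\le 1$ versus $\norm{w}_{L^2}\le 1$ is shared by the paper (compare the hypothesis of Theorem~\ref{main-part-1} with that of Lemma~\ref{i4}); it does not affect the final theorem.
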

\begin{proof}
From \eqref{w} we write
\[
\norm{e^{\frac i\eps\, \omega_{\eps}(t)} \psi(t,\cdot)}_{L^{2}}^{2} = \norm{w(t,\cdot)}_{L^{2}}^{2} + \norm{U_{\sigma(t)}(\cdot)}_{L^{2}}^{2} + 2\, \Re \int_{\R^{N}}\, w(t,x)\, \overline{U_{\sigma(t)}(x)}\, dx\, .
\]
Moreover using \eqref{tang-eps-theta}
\[
\Re \int_{\R^{N}}\, w(t,x)\, \overline{U_{\sigma(t)}(x)}\, dx = - \Im \int_{\R^{N}}\, w(t,x)\, \overline{i\, U_{\sigma(t)}(x)}\, dx = -2\eps \, \omega(w, z_{2N+1,\sigma}^{\eps})
\]
and using \eqref{carica-cost}
\[
\norm{e^{\frac i\eps\, \omega_{\eps}(t)} \psi(t,\cdot)}_{L^{2}}^{2} = \norm{U_{\sigma(t)}(\cdot)}_{L^{2}}^{2}= \eps^{2\gamma +\beta N}\, \rho\, , \qquad \forall\, t\, .
\]
Hence
\[
\partial_{t} \norm{w(t,\cdot)}_{L^{2}}^{2} = 4\eps \, \partial_{t}\, \Big( \omega(w, z_{2N+1,\sigma}^{\eps})\Big)
\]
We now use Lemma \ref{sympl-w-tilde-w} for $j=2N+1$ to write
\begin{equation} \label{primo-passo}
\partial_{t} \norm{w(t,\cdot)}_{L^{2}}^{2} = 4\eps^{2\gamma+\beta N} \, \partial_{t}\, \Big( \omega(\tilde w, z_{2N+1,0})\Big) = 4\eps^{2\gamma+\beta N} \, \omega(\partial_{t} \tilde w, z_{2N+1,0})
\end{equation}
The final step is to use the results in Appendix \ref{app:calcoli}. In particular, notations \eqref{di1}-\eqref{di4} and Lemmas \ref{i1}-\ref{i4}, imply
\[
\omega(\partial_{t} \tilde w, z_{2N+1,0}) = \omega(I_{2}, z_{2N+1,0}) + \omega(I_{4}, z_{2N+1,0})
\]
and
\[
\Big| \omega(\partial_{t} \tilde w, z_{2N+1,0}) \Big| \le \norm{\tilde w}_{L^{2}} \Big( \frac 12 \eps^{\beta-1} \norm{|\tilde x| U(\tilde x)}_{L^{2}}\, |\vv(t)| + \eps^{-1} \norm{|\mathcal{R}_{V}(t,\tilde x)|\, U(\tilde x)}_{L^{2}} + \eps^{1-2\beta}\, C(z_{j,0},U)\Big)
\]
This, together with \eqref{primo-passo} and
\[
\norm{\tilde w}_{L^{2}} = \eps^{-\gamma -\beta \frac N2}\, \norm{w}_{L^{2}}
\]
which follows from \eqref{w-tilde}, imply \eqref{stima-princ-1}.
\end{proof}

\section{Proof of Theorem \ref{main-result}} \label{sec:proof}

We first study the behavior of $\vv(t)$ and $\mathcal{R}_{V}(t,x)$ as defined in \eqref{vt} and Proposition \ref{der-w-tilde}. Notice that they are defined only in terms of $U$ and $V$, and do not depend on the solution $\psi(t,x)$ of \eqref{prob-eps}.

Let first consider the case of potentials $V$ satisfying assumptions (Vr). By (C2) it is immediate that system \eqref{traj} can be written as
\[
\left\{
\begin{aligned}
& \dot a = \xi \\
& \dot \xi = -2\, \nabla V(a) + O(\eps^{\beta})\\
& \dot \theta = 0
\end{aligned}
\right.
\]
and for all $t\in \R$
\begin{align}
& |\vv(t)| \le \eps^{\beta}\ N\, \frac{h_{V}}{\rho}\, \int_{\R^{N}}\, |x|\, U^{2}(x)\, dx \label{stima-v-r} \\
& |\mathcal{R}_{V}(t,x)| \le \eps^{2\beta}\, \frac{N^{2}}{2} \, h_{V}\, |x|^{2} \label{stima-R-r}
\end{align}

Let now $V$ satisfy assumptions (Vs), then by (Vs2) and (Vs3) it follows that if the solution of \eqref{traj} satisfies $a(t)\not= 0$ for all $t\in \R$, then 
\begin{align}
& \Big| \int_{\R^{N}}\, V(a(t)+\eps^{\beta}x)\, U^{2}(x)\, dx \Big| < \infty \\
& |\vv(t)| \le |\nabla V(a(t))| + \Big| \int_{\R^{N}}\, \nabla V(a(t)+\eps^{\beta}x)\, U^{2}(x)\, dx \Big| < \infty \label{stima-v-s} \\
& \norm{\, |\mathcal{R}_{V}(t,x)|\, \varphi(x)}_{L^{2}} \le const(\zeta,\varphi) \label{stima-R-s}
\end{align}
for all $t\in \R$ and for $\varphi(x) = U(x)$, $|x|U(x)$, $U(x)|\nabla U(x)|$. 

Let $\psi(t,x)$ be the solution of \eqref{prob-eps}. Let $\sigma(t)= (a(t), \xi(t), \theta(t))$ be the solution of \eqref{traj} with initial conditions $\sigma_{0}= (a_{0}, \xi_{0}, \theta_{0})$ such that $a(t)\not= 0$ for all $t\in \R$ if $V$ is singular at the origin, and $U_{\sigma(t)}$ the element in $\MM_{\eps}$ associated to $\sigma(t)$. Finally let $\omega_{\eps}(t)$ be the solution of the Cauchy problem \eqref{cauchy-om}. Then the function $w(t,x)$ defined in \eqref{w} satisfies
\[
\norm{w(0,x)}_{L^{2}} = 0
\]
Hence \eqref{stima-princ-1} holds for $t\in (0,\tau)$ with $\tau$ small enough.

Moreover from \eqref{stima-v-r}-\eqref{stima-R-r} and \eqref{stima-v-s}-\eqref{stima-R-s} follow that in both cases for $V$, there exists a constant $C(U,a_{0},\xi_{0},\zeta)$ not depending on $t$, such that from \eqref{stima-princ-1} we get
\[
\Big| \partial_{t} \norm{w(t,\cdot)}_{L^{2}} \Big| \le C(U,a_{0},\xi_{0},\zeta)\,  \eps^{\gamma+\beta \frac N2} \Big( \eps^{\beta -1} + \eps^{-1} + \eps^{1-2\beta}\Big)\, ,\qquad  \forall\, t\in (0,\tau)
\]
Since $\beta \ge 1$, it follows
\begin{equation} \label{stima-princ-2}
\Big| \partial_{t} \norm{w(t,\cdot)}_{L^{2}} \Big| \le C(U,a_{0},\xi_{0},\zeta)\,  \eps^{1+\gamma+\beta (\frac N2-2)}\, ,\qquad  \forall\, t\in (0,\tau)
\end{equation}
First of all, by (N4) we can write
\[
\delta:= 1+\gamma+\beta \left(\frac N2-2 \right) = \frac N2 - 1 + \gamma + (\alpha-\gamma)\, p \left( \frac N2 -2\right) 
\]
and $\delta >0$ if $N\ge 4$. If $N=3$ instead we also need to assume
\[
\alpha - \gamma < \frac 2p \left(\gamma + \frac 12 \right)
\]
to have $\delta >0$. However, in both cases we get $\tau = O(\eps^{-\delta})$, hence our argument is consistent.

Moreover, for any fixed $\eta \in (0,\delta)$, estimates \eqref{stima-princ-2} immediately implies
\[
\norm{w(t,\cdot)}_{L^{2}} = O(\eps^{\eta})
\]
for all $t \in (0,T)$ with $T= O(\eps^{\eta-\delta})$. The proof is complete. \qed

\appendix

\section{The approximation of the symplectic projection} \label{app:calcoli}

We have approximated the solution $\psi(t,x)$ of \eqref{prob-eps} by a projection on the manifold $\MM_{\eps}$ of solitons. As stated above, the manifold $\MM_{\eps}$ is not symplectic and the projection $U_{\sigma(t)}$ is not obtained by a symplectic decomposition as in \cite{froh1} and subsequent papers. However we now show that the difference
\[
w(t,x) = e^{\frac i \eps \, \omega_{\eps}(t)}\psi(t,x) - U_{\sigma(t)}(t,x)
\]
is almost symplectic orthogonal to $\MM_{\eps}$ for long time. In particular we show that the quantities $\omega(w,z_{j,\sigma}^{\eps})$ increase slowly. 

By Lemma \ref{sympl-w-tilde-w}, we only need to compute the derivatives
\[
\partial_{t} \omega(\tilde w, z_{j,0}) = \omega(\partial_{t}\tilde w, z_{j,0})
\]
and use \eqref{traj}. We use Proposition \ref{der-w-tilde} and write
\[
\partial_{t}\tilde w = I_{1} + I_{2} + I_{3} + I_{4}
\]
where
\begin{align}
& I_{1}:=  \frac i \eps \Big( \eps^{\beta}\tilde x \cdot \vv(t) - \mathcal{R}_{V}(t,\tilde x)\Big) \, U(\tilde x) \label{di1} \\
& I_{2}:=  \frac i \eps \Big( \eps^{\beta}\tilde x \cdot \vv(t) - \mathcal{R}_{V}(t,\tilde x)\Big) \, \tilde w(t,\tilde x) \label{di2}\\
& I_{3}:= \begin{aligned}&  i \eps^{1-2\beta} \Big[ \triangle \tilde w(t,\tilde x) + \omega\, \tilde w(t,\tilde x) - \Big( 2 f'(U^{2}(\tilde x)) U^{2}(\tilde x) + f(U^{2}(\tilde x)) \Big) \Re( \tilde w(t,\tilde x)) + \\ &  - i\, f(U^{2}(\tilde x)) \Im( \tilde w(t,\tilde x)) \Big] \end{aligned} \label{di3}\\
& I_{4}:=  - i \eps^{1-2\beta} \mathcal{R}_{F}(t,\tilde x) \label{di4}
\end{align}

\begin{lemma}\label{i1} Recalling notation \eqref{vt}, we have
\[
\omega(I_{1}, z_{j,0}) = \left\{ 
\begin{aligned}
& \frac 12 \eps^{\beta-1} \rho \vv_{j}(t) + \frac{1}{2\eps} \int_{\R^{N}}\, \mathcal{R}_{V}(t,\tilde x)\, \partial_{j} (U^{2}) (\tilde x)\, d\tilde x\, , & j=1,\dots,N\\
& 0\, , & j=N+1,\dots, 2N+1
\end{aligned}
\right.
\]
\end{lemma}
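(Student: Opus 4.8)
\emph{Proof proposal.} The plan is to exploit the fact that $I_{1}$ is purely imaginary. The scalar factor $\eps^{\beta}\tilde x\cdot\vv(t)-\mathcal{R}_{V}(t,\tilde x)$ is real and $U$ is real, so we may write $I_{1}=i\,h(t,\tilde x)$ with $h(t,\tilde x):=\eps^{-1}\big(\eps^{\beta}\tilde x\cdot\vv(t)-\mathcal{R}_{V}(t,\tilde x)\big)\,U(\tilde x)$ real-valued. Then $\omega(I_{1},z_{j,0})=\Im\int_{\R^{N}} i\,h(t,\tilde x)\,\overline{z_{j,0}(\tilde x)}\,d\tilde x$, and the computation splits according to whether $z_{j,0}$ is real or imaginary.

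First, for $j=N+1,\dots,2N$ and $j=2N+1$, the generators in \eqref{tang-xi-0}--\eqref{tang-theta-0} are $i$ times a real function (namely $\tfrac12\tilde x_{j-N}U$ and $U$ respectively); hence $i\,h\,\overline{z_{j,0}}$ is a real quantity and its imaginary part vanishes, giving $\omega(I_{1},z_{j,0})=0$ with no further work. For $j=1,\dots,N$ we use $z_{j,0}=-\partial_{j}U$ from \eqref{tang-a-0}, which is real, so $i\,h\,\overline{z_{j,0}}=-i\,h\,\partial_{j}U$ and therefore
\[
\omega(I_{1},z_{j,0})=-\int_{\R^{N}} h(t,\tilde x)\,\partial_{j}U(\tilde x)\,d\tilde x=-\frac1\eps\int_{\R^{N}}\big(\eps^{\beta}\tilde x\cdot\vv(t)-\mathcal{R}_{V}(t,\tilde x)\big)\,U(\tilde x)\,\partial_{j}U(\tilde x)\,d\tilde x.
\]
Next I would rewrite $U\,\partial_{j}U=\tfrac12\partial_{j}(U^{2})$ and treat the two terms separately. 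The $\mathcal{R}_{V}$ term produces exactly $\tfrac1{2\eps}\int_{\R^{N}}\mathcal{R}_{V}(t,\tilde x)\,\partial_{j}(U^{2})(\tilde x)\,d\tilde x$, with nothing more to do. In the remaining term $-\tfrac{\eps^{\beta-1}}2\int_{\R^{N}}(\tilde x\cdot\vv(t))\,\partial_{j}(U^{2})\,d\tilde x$, I would integrate by parts coordinate by coordinate, using $\int_{\R^{N}}\tilde x_{k}\,\partial_{j}(U^{2})\,d\tilde x=-\delta_{jk}\int_{\R^{N}}U^{2}\,d\tilde x=-\delta_{jk}\,\rho$ by (C1); this contributes $\tfrac{\eps^{\beta-1}}2\,\rho\,\vv_{j}(t)$, and summing the two contributions yields the claimed formula.

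The only delicate point is the legitimacy of the integration by parts: the vanishing of the boundary term and the absolute convergence of $\int_{\R^{N}}|\tilde x|\,U\,|\nabla U|\,d\tilde x$ and $\int_{\R^{N}}|\tilde x|\,U^{2}\,d\tilde x$. Both follow from the decay hypotheses collected in (C2) (in particular $\norm{|x|U^{2}}_{L^{1}}<\infty$ and $\norm{|x||\nabla U|}_{L^{2}}<\infty$, together with $U\in L^{2}$). One should also note, for later use, that the surviving integral $\int_{\R^{N}}\mathcal{R}_{V}\,\partial_{j}(U^{2})\,d\tilde x$ is finite — in case (Vr) by \eqref{stima-R-r} combined with (C2), and in case (Vs), when $a(t)\neq 0$, by \eqref{stima-R-s} — but this is needed only when the identity is invoked in Section~\ref{sec:proof}, not for the identity itself.
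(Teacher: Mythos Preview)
Your proposal is correct and follows essentially the same approach as the paper: write $I_{1}$ as $i$ times a real function, observe that for $j\ge N+1$ the integrand is real so the symplectic pairing vanishes, and for $j\le N$ use $U\partial_{j}U=\tfrac12\partial_{j}(U^{2})$ together with an integration by parts (justified by (C2)) on the $\tilde x\cdot\vv(t)$ term. Your treatment is in fact slightly more explicit than the paper's, which records the same computation line by line without spelling out the real/imaginary reasoning.
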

\begin{proof}
For $j=1,\dots,N$, using \eqref{tang-a-0} and (C2),
\begin{align*}
\omega(I_{1}, z_{j,0}) = & \Im \int_{\R^{N}}\, \frac i \eps \Big( \eps^{\beta}\tilde x \cdot \vv(t) - \mathcal{R}_{V}(t,\tilde x)\Big) \, U(\tilde x) \, \overline{(- \partial_{j}U(\tilde x))}\, d\tilde x =\\
= & - \frac 12 \eps^{\beta-1} \int_{\R^{N}}\, \tilde x \cdot \vv(t) \, \partial_{j} (U^{2}) (\tilde x)\, d\tilde x + \frac{1}{2\eps} \int_{\R^{N}}\, \mathcal{R}_{V}(t,\tilde x)\, \partial_{j} (U^{2}) (\tilde x)\, d\tilde x =\\
= & \frac 12 \eps^{\beta-1} \int_{\R^{N}}\, U^{2}(\tilde x)\, \partial_{j}(\tilde x \cdot \vv(t))\, d\tilde x + \frac{1}{2\eps} \int_{\R^{N}}\, \mathcal{R}_{V}(t,\tilde x)\, \partial_{j} (U^{2}) (\tilde x)\, d\tilde x = \\
= & \frac 12 \eps^{\beta-1} \rho \vv_{j}(t) + \frac{1}{2\eps} \int_{\R^{N}}\, \mathcal{R}_{V}(t,\tilde x)\, \partial_{j} (U^{2}) (\tilde x)\, d\tilde x
\end{align*}
For $j=N+1,\dots, 2N$, using \eqref{tang-xi-0},
\begin{align*}
\omega(I_{1}, z_{j,0}) = & \Im \int_{\R^{N}}\, \frac i \eps \Big( \eps^{\beta}\tilde x \cdot \vv(t) - \mathcal{R}_{V}(t,\tilde x)\Big) \, U(\tilde x) \, \overline{\Big(i \frac 12 x_{j-N} U(\tilde x)\Big)}\, d\tilde x = 0
\end{align*}
For $j=2N+1$, using \eqref{tang-theta-0},
\begin{align*}
\omega(I_{1}, z_{j,0}) = & \Im \int_{\R^{N}}\, \frac i \eps \Big( \eps^{\beta}\tilde x \cdot \vv(t) - \mathcal{R}_{V}(t,\tilde x)\Big) \, U(\tilde x) \, \overline{\Big(i U(\tilde x)\Big)}\, d\tilde x = 0 \\
\end{align*}
and the proof is finished.
\end{proof}

\begin{lemma}\label{i2} Recalling notation \eqref{vt}, we have
\[
|\omega(I_{2}, z_{j,0})| \le \left\{ 
\begin{aligned}
& \norm{\tilde w}_{L^{2}}\, \Big( \eps^{\beta-1} \norm{|\tilde x| \left|\nabla U(\tilde x)\right|}_{L^{2}}\, |\vv(t)| + \eps^{-1} \norm{\mathcal{R}_{V}(t,\cdot)\, |\nabla U|}_{L^{2}}\Big)  \, , & j=1,\dots,N\\
& \norm{\tilde w}_{L^{2}}\, \Big( \frac 12 \eps^{\beta-1} \norm{|\tilde x|^{2} U(\tilde x)}_{L^{2}}\, |\vv(t)| + \eps^{-1} \norm{\mathcal{R}_{V}(t,\tilde x)\, |\tilde x| U(\tilde x)}_{L^{2}}\Big)  \, , & j=N+1,\dots, 2N\\
& \norm{\tilde w}_{L^{2}}\, \Big( \frac 12 \eps^{\beta-1} \norm{|\tilde x| U(\tilde x)}_{L^{2}}\, |\vv(t)| + \eps^{-1} \norm{\mathcal{R}_{V}(t,\cdot)\, U}_{L^{2}}\Big)  \, , & j=2N+1
\end{aligned}
\right.
\]
\end{lemma}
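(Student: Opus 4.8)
The plan is to imitate the proof of Lemma~\ref{i1}, the one essential difference being that the factor paired against $\overline{z_{j,0}}$ is now the unstructured function $\tilde w$ rather than $U$, so no integration by parts is available and one simply estimates by Cauchy--Schwarz. Concretely, for each $j$ I would start from the definitions of $\omega$ and of $I_{2}$,
\[
\omega(I_{2},z_{j,0}) = \Im \int_{\R^{N}} \frac i\eps\, \Big(\eps^{\beta}\tilde x\cdot\vv(t) - \mathcal{R}_{V}(t,\tilde x)\Big)\, \tilde w(t,\tilde x)\, \overline{z_{j,0}(\tilde x)}\, d\tilde x\, ,
\]
and substitute the generators \eqref{tang-a-0}--\eqref{tang-theta-0}. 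Writing $R(t,\tilde x):=\eps^{\beta}\tilde x\cdot\vv(t) - \mathcal{R}_{V}(t,\tilde x)$, which is real, and using that $U$, $\partial_{j}U$ and $\tilde x_{j-N}$ are real, taking the imaginary part isolates a single component of $\tilde w$: for $j=1,\dots,N$ the integrand reduces to $-\tfrac1\eps R\,\partial_{j}U\,\Re\tilde w$, whereas for $j=N+1,\dots,2N$ and for $j=2N+1$ it reduces, up to the numerical constant carried by the relevant generator, to $\tfrac1{2\eps}R\,\tilde x_{j-N}U\,\Im\tilde w$ and $\tfrac1{2\eps}R\,U\,\Im\tilde w$ respectively.

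Next I would estimate crudely. Pointwise I bound $|\Re\tilde w|$ and $|\Im\tilde w|$ by $|\tilde w|$, $|\partial_{j}U|$ by $|\nabla U|$ and $|\tilde x_{j-N}|$ by $|\tilde x|$, and I split the scalar factor by the triangle inequality $|R(t,\tilde x)|\le \eps^{\beta}|\tilde x|\,|\vv(t)|+|\mathcal{R}_{V}(t,\tilde x)|$. Applying Cauchy--Schwarz to each of the two resulting integrals pulls out $\norm{\tilde w}_{L^{2}}$ and leaves, multiplying $|\vv(t)|$, the weights $\norm{|\tilde x|\,|\nabla U|}_{L^{2}}$, $\norm{|\tilde x|^{2}U}_{L^{2}}$, $\norm{|\tilde x|U}_{L^{2}}$ in the three cases, and, multiplying $\eps^{-1}$, the norms $\norm{\mathcal{R}_{V}\,|\nabla U|}_{L^{2}}$, $\norm{\mathcal{R}_{V}\,|\tilde x|U}_{L^{2}}$, $\norm{\mathcal{R}_{V}\,U}_{L^{2}}$; the coefficient $\tfrac1{2\eps}$ in front of the $\mathcal{R}_{V}$ terms for $j\ge N+1$ is simply bounded by $\eps^{-1}$, and this produces exactly the three stated inequalities.

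There is no genuine obstacle: once the symplectic pairing is written out, the estimate is one application of Cauchy--Schwarz, and the only book-keeping is to track which part ($\Re$ or $\Im$) of $\tilde w$ survives for each range of $j$ and with which power of $\eps$ and which generator constant. What is worth emphasising is the contrast with Lemma~\ref{i1}: there the factor paired against $\overline{z_{j,0}}$ was $U$, so one could move the derivative onto $U^{2}$ by parts and use $\int U^{2}=\rho$ to extract an \emph{exact} contribution $\tfrac12\eps^{\beta-1}\rho\,\vv_{j}(t)$; here $\tilde w$ has no such structure, so the pairing can only be bounded and contributes a genuine error term of size $\norm{\tilde w}_{L^{2}}$. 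The finiteness and, above all, the smallness of the weighted norms entering the right-hand sides are not needed to prove the inequality itself; they are supplied, in both the regular and the singular case, by the pointwise and integral bounds on $\mathcal{R}_{V}$ and the decay assumptions on $U$ used in Section~\ref{sec:proof}.
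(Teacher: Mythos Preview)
Your proposal is correct and follows exactly the same route as the paper: write out the symplectic pairing explicitly, bound the real scalar factor $\eps^{\beta}\tilde x\cdot\vv(t)-\mathcal{R}_{V}$ by the triangle inequality, and apply Cauchy--Schwarz. The paper's own proof is in fact terser than yours---it treats only $j=1,\dots,N$ explicitly and dispatches the remaining cases with ``proved in the same way''---so your additional book-keeping about which of $\Re\tilde w$ or $\Im\tilde w$ survives is extra detail rather than a different method.
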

\begin{proof}
For $j=1,\dots,N$, using \eqref{tang-a-0},
\begin{align*}
|\omega(I_{2}, z_{j,0})| = & \Big| \Im \int_{\R^{N}}\, \frac i \eps \Big( \eps^{\beta}\tilde x \cdot \vv(t) - \mathcal{R}_{V}(t,\tilde x)\Big) \, \tilde w(t,\tilde x) \, \overline{(- \partial_{j}U(\tilde x))}\, d\tilde x \Big|\le\\
\le & \eps^{\beta-1} \int_{\R^{N}}\, |\tilde x \cdot \vv(t)| \, |\tilde w(t,\tilde x)| \left| \partial_{j} U (\tilde x)\right|\, d\tilde x + \eps^{-1} \int_{\R^{N}}\, \left|\mathcal{R}_{V}(t,\tilde x)\right| \, |\tilde w(t,\tilde x)| \left| \partial_{j} U (\tilde x)\right|\, d\tilde x 
\end{align*}
and then use $|\tilde x \cdot \vv(t)|\le |\tilde x| |\vv(t)|$ and Cauchy-Schwarz inequality.

The cases $j=N+1,\dots, 2N+1$ are proved in the same way.
\end{proof}

\begin{lemma}\label{i3} 
\[
\omega(I_{3}, z_{j,0}) = \left\{ 
\begin{aligned}
& 0\, , & j=1,\dots,N\\
& \eps^{1-2\beta}\, \omega(\tilde w, z_{j-N,0}) \, , & j=N+1,\dots, 2N\\
& 0\, , & j=2N+1
\end{aligned}
\right.
\]
\end{lemma}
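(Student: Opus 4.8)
The plan is to recognize $I_3$ as $i\,\eps^{1-2\beta}$ times the linearization of \eqref{ellittica} around $U$ applied to $\tilde w$. Writing $\tilde w = u + iv$ with $u = \Re\tilde w$, $v = \Im\tilde w$ and separating real and imaginary parts,
\[
I_3 = i\,\eps^{1-2\beta}\big(L_+u + i\,L_-v\big),
\]
where $L_+ := \triangle + \omega - \big(2f'(U^2)U^2 + f(U^2)\big)$ and $L_- := \triangle + \omega - f(U^2)$ are real, formally self-adjoint operators. The whole lemma then rests on three elementary identities: $L_-U = 0$, which is exactly \eqref{ellittica}; $L_+\partial_jU = 0$ for $j = 1,\dots,N$, obtained by differentiating \eqref{ellittica} in $x_j$ and using $\partial_j\big(f(U^2)U\big) = \big(2f'(U^2)U^2+f(U^2)\big)\partial_jU$; and $L_-(x_jU) = 2\partial_jU$, which follows from $\triangle(x_jU) = x_j\triangle U + 2\partial_jU$ together with $L_-U = 0$.

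Next I would evaluate $\omega(I_3, z_{j,0}) = \Im\int_{\R^N}I_3\,\overline{z_{j,0}}\,d\tilde x$ in the three ranges of $j$, using \eqref{tang-a-0}--\eqref{tang-theta-0} and keeping track of which of the two summands of $I_3$ is real and which is purely imaginary once multiplied by $\overline{z_{j,0}}$. For $j = 1,\dots,N$, $z_{j,0} = -\partial_jU$ is real, so only the $L_+u$-term contributes to the imaginary part; moving $L_+$ onto $\partial_jU$ by self-adjointness and invoking $L_+\partial_jU = 0$ yields $0$. For $j = 2N+1$, $z_{2N+1,0} = iU$ is purely imaginary, so only the $L_-v$-term survives, and it equals $\eps^{1-2\beta}\int_{\R^N} v\,L_-U\,d\tilde x = 0$. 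For $j = N+1,\dots,2N$, $z_{j,0} = \tfrac i2 x_{j-N}U$ is again purely imaginary and again only the $L_-v$-term survives; self-adjointness converts $\int_{\R^N}(x_{j-N}U)\,L_-v\,d\tilde x$ into $\int_{\R^N} v\,L_-(x_{j-N}U)\,d\tilde x = 2\int_{\R^N} v\,\partial_{j-N}U\,d\tilde x$, and rewriting this in terms of $\omega(\tilde w, z_{j-N,0})$ through \eqref{tang-a-0} gives the claimed $\eps^{1-2\beta}\,\omega(\tilde w, z_{j-N,0})$.

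The only step needing care is the self-adjointness manipulation, i.e. $\int_{\R^N}(\triangle\tilde w)\,\chi\,d\tilde x = \int_{\R^N}\tilde w\,(\triangle\chi)\,d\tilde x$ for $\chi \in \{U,\ \partial_jU,\ x_jU\}$: since $\tilde w(t,\cdot)$ is only known to be in $H^1$, this has to be read as $-\int_{\R^N}\nabla\tilde w\cdot\nabla\chi\,d\tilde x = \int_{\R^N}\tilde w\,\triangle\chi\,d\tilde x$, which is legitimate because $U$ is smooth (elliptic regularity for \eqref{ellittica}) and because assumption (C2) --- finiteness of $\norm{|x|U^2}_{L^1}$, $\norm{|x|^2U^2}_{L^2}$, $\norm{|x||\nabla U|}_{L^2}$ together with $U \in L^\infty$ --- gives $\chi$, $\nabla\chi$ and $\triangle\chi$ enough decay and integrability to pair against $H^1$ with no boundary term. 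I expect this bookkeeping to be the only mild obstacle; apart from it the lemma is a purely algebraic identity involving no estimate, and it is precisely this exactness --- in particular the vanishing for $j = 2N+1$, which is what makes $I_3$ drop out of the $L^2$-norm estimate in Theorem \ref{main-part-1}, and the clean reproduction of $\omega(\tilde w, z_{j-N,0})$ for $j > N$, which is then fed into \eqref{traj} and \eqref{cauchy-om} --- that is needed in the sequel.
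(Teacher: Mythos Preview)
Your proof is correct and follows the same route as the paper's: recognize $I_{3}=i\,\eps^{1-2\beta}\mathcal{L}(\tilde w)$ with $\mathcal{L}$ the linearization of \eqref{ellittica}, move $\mathcal{L}$ across by self-adjointness, and evaluate $\mathcal{L}(z_{j,0})$. The paper outsources this last evaluation to Lemma~2 of \cite{colliding}, whereas you derive the identities $L_{-}U=0$, $L_{+}\partial_{j}U=0$ and $L_{-}(x_{j}U)=2\partial_{j}U$ directly from \eqref{ellittica}; your version is therefore more self-contained but otherwise identical in structure.
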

\begin{proof}
Notice that
\[
I_{3} = i \eps^{1-2\beta}\, \mathcal{L}(\tilde w)
\]
where $\mathcal{L}$ is the Hessian of the energy associated to \eqref{ellittica}. Then
\[
\omega(I_{3}, z_{j,0}) = - \eps^{1-2\beta} \Im \int_{\R^{N}}\, \tilde w(t,\tilde x)\, \overline{i \mathcal{L}(z_{j,0})}\, d\tilde x
\]
and we conclude using Lemma 2 in \cite{colliding}.
\end{proof}

\begin{lemma}\label{i4} If $\norm{\tilde w(t,\cdot)}_{L^{2}}\le 1$ then
\[
|\omega(I_{4}, z_{j,0})| \le \eps^{1-2\beta}\, C(z_{j,0},U) \, \norm{\tilde w}_{L^{2}}
\]
for all $j=1,\dots,2N+1$.
\end{lemma}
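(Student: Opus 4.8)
The plan is to recognise the function $\mathcal{R}_{F}$ as exactly the quantity that assumption (N2) is designed to control, and then to check that each tangent vector $z_{j,0}$ has the integrability (N2) requires.

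First I would unwind $I_{4}$. Since $I_{4} = -i\eps^{1-2\beta}\mathcal{R}_{F}(t,\tilde x)$, $\omega(\phi,\chi) = \Im\int_{\R^{N}}\phi\,\bar\chi\,dx$, and $\Im(-i\,z) = -\Re(z)$ for $z\in\C$, one gets
\[
\omega(I_{4},z_{j,0}) = -\eps^{1-2\beta}\,\Re\int_{\R^{N}}\mathcal{R}_{F}(t,\tilde x)\,\overline{z_{j,0}(\tilde x)}\,d\tilde x\, ,
\]
hence $\abs{\omega(I_{4},z_{j,0})} \le \eps^{1-2\beta}\,\abs{\int_{\R^{N}}\mathcal{R}_{F}(t,\tilde x)\,\overline{z_{j,0}(\tilde x)}\,d\tilde x}$. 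Comparing the definition of $\mathcal{R}_{F}$ in Proposition \ref{der-w-tilde} term by term with the bracket in (N2), one sees that with the choice $v = \tilde w(t,\cdot)$ the integrand $\mathcal{R}_{F}(t,\cdot)$ is precisely the expression appearing there. Since $\tilde w(t,\cdot)\in H^{1}$ and $\norm{\tilde w(t,\cdot)}_{L^{2}}\le 1$ by hypothesis, assumption (N2) applied with $\varphi = z_{j,0}$ yields
\[
\abs{\int_{\R^{N}}\mathcal{R}_{F}(t,\tilde x)\,\overline{z_{j,0}(\tilde x)}\,d\tilde x}\ \le\ C(z_{j,0},U)\,\norm{\tilde w}_{L^{2}}\, ,
\]
with $C(z_{j,0},U)$ the constant from (N2); multiplying by $\eps^{1-2\beta}$ gives the claim.

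The only point left to verify is that each $z_{j,0}$ lies in $L^{r}(\R^{N})$ for some $r\in(2,\tfrac{2N}{N-2})$, so that (N2) applies. For $j = 2N+1$ this is immediate: $z_{2N+1,0} = iU \in L^{2}\cap L^{\infty}$ by (C1)--(C2), hence in every $L^{r}$, $r\in[2,\infty]$ (and this is the only $j$ actually needed in Theorem \ref{main-part-1}). For $j=1,\dots,N$ we have $z_{j,0} = -\partial_{j}U$, and $\nabla U\in L^{2}$ since $U\in H^{1}$; since $U\in L^{\infty}$, a standard elliptic bootstrap on \eqref{ellittica} puts $U$ in $W^{2,q}_{\mathrm{loc}}$ for all finite $q$, so $\nabla U\in L^{q}$ for all finite $q\ge 2$, in particular in the admissible range. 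For $j=N+1,\dots,2N$ we have $z_{j,0} = \tfrac i2\, x_{j-N}U$; from $\nabla(x_{j-N}U) = e_{j-N}U + x_{j-N}\nabla U$ and (C2) (which gives $U\in L^{2}$ and $\norm{|x|\,|\nabla U|}_{L^{2}}<\infty$) we get $x_{j-N}U\in\dot H^{1}$, hence $x_{j-N}U\in L^{2N/(N-2)}$ by Sobolev; interpolating with an $L^{2}$-type weighted bound extracted from (C2) gives $x_{j-N}U\in L^{r}$ for all $r\in[2,\tfrac{2N}{N-2}]$.

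The main obstacle is precisely this last integrability check for the momentum-direction tangent vectors $x_{j}U$: for $N\ge 4$ the Sobolev exponent $\tfrac{2N}{N-2}$ is no larger than $4$, so one cannot rely on Sobolev together with the $L^{4}$-type information $\norm{|x|^{2}U^{2}}_{L^{2}}<\infty$ alone, and must carefully use the weighted $L^{2}$ decay of $U$ in (C2). Everything else is a direct translation of Proposition \ref{der-w-tilde} into the language of (N2).
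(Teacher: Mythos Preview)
Your proof is correct and follows exactly the paper's approach, which consists of the single line ``It follows immediately from (N2) and (C2).'' You have in fact supplied considerably more detail than the paper in verifying that each $z_{j,0}$ lies in the admissible $L^{r}$ range required by (N2); your identification of the momentum-direction vectors $x_{j}U$ as the delicate case is apt, and the paper leaves that check entirely implicit in its citation of (C2).
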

\begin{proof}
It follows immediately from (N2) and (C2).
\end{proof}

We can now prove
\begin{proposition}\label{crescita}
Under the same assumptions of Theorem \ref{main-result}, it holds
\[
\max_{j=1,\dots,2N+1}\, \Big| \partial_{t}\, \omega(w,z_{j,\sigma}^{\eps}) \Big| = O(\eps^{\delta-1})
\]
for all $t\in (0,\tau)$ with $\tau= O(\eps^{-\delta})$.
\end{proposition}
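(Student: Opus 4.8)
The plan is to assemble the estimate by combining Lemma~\ref{sympl-w-tilde-w} with Lemmas~\ref{i1}--\ref{i4}, exactly mirroring the proof of Theorem~\ref{main-part-1} but now keeping track of all $2N+1$ directions rather than just $j=2N+1$. First I would fix $j$ and use Lemma~\ref{sympl-w-tilde-w} to express $\partial_t \omega(w, z_{j,\sigma}^{\eps})$ in terms of $\partial_t \omega(\tilde w, z_{k,0})$ for $k \in \{j', 2N+1\}$ (where $j'$ is $j$, $j-N$, or $2N+1$ according to the block), picking up powers of $\eps$ from the prefactors as well as factors of $\xi_j(t)$ or $a_{j-N}(t)$. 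Since $\sigma(t)$ solves \eqref{traj} with initial data $\sigma_0$, on the time interval $(0,\tau)$ with $\tau = O(\eps^{-\delta})$ the trajectory $(a(t),\xi(t))$ stays in a bounded region (in the regular case this is immediate from the reduced system $\dot\xi = -2\nabla V(a) + O(\eps^\beta)$; in the singular case one uses the hypothesis $\min_t|a(t)| = \bar a > 0$), so $|\xi_j(t)|$ and $|a_{j-N}(t)|$ are $O(1)$ uniformly in $t$.

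Next I would substitute $\partial_t \tilde w = I_1 + I_2 + I_3 + I_4$ and invoke Lemmas~\ref{i1}--\ref{i4} term by term. The point is that each of the four contributions to $\omega(\partial_t\tilde w, z_{k,0})$ is controlled: $\omega(I_1, z_{k,0})$ is $O(\eps^{\beta-1})$ plus the $\mathcal{R}_V$-term which is $O(\eps^{2\beta-1})$ (regular case) or $O(1)$ times $\eps^{-1}$ against an integrable weight (singular case), hence $O(\eps^{\beta-1})$ in both cases after using \eqref{stima-v-r}--\eqref{stima-R-r} and \eqref{stima-v-s}--\eqref{stima-R-s}; $\omega(I_2, z_{k,0})$ is bounded by $\norm{\tilde w}_{L^2}$ times $O(\eps^{\beta-1})$; $\omega(I_3, z_{k,0})$ is either zero or $\eps^{1-2\beta}\omega(\tilde w, z_{k-N,0})$, and since $\norm{\tilde w}_{L^2} = \eps^{-\gamma - \beta N/2}\norm{w}_{L^2}$ with $\norm{w}_{L^2} = O(\eps^\eta)$ on $(0,T)$, one checks $|\omega(\tilde w, z_{k-N,0})| \le \norm{\tilde w}_{L^2}\norm{z_{k-N,0}}_{L^2}$ is small; and $\omega(I_4, z_{k,0})$ is $O(\eps^{1-2\beta})\norm{\tilde w}_{L^2}$ by Lemma~\ref{i4}, valid since $\norm{\tilde w}_{L^2} \le 1$ on the relevant interval. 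Adding the prefactors from Lemma~\ref{sympl-w-tilde-w} (the worst of which is $\eps^{2\gamma + \beta(N-1)}$ for the $a$-block, $\eps^{2\gamma+\beta N - 1}$ for the $\xi$-block and $\theta$-direction) one multiplies through and uses $\beta \ge 1$ to see that the dominant power is $\eps^{1 + \gamma + \beta(N/2 - 2)} \cdot \eps^{\gamma + \beta(N/2)-1}\cdot\eps^{\text{(something)}}$; more carefully, the product of the prefactor with the $\eps^{1-2\beta}$ and the $\eps^{-\gamma-\beta N/2}$ from converting $\norm{\tilde w}_{L^2}$ to $\norm{w}_{L^2}$ collapses to $\eps^{1+\gamma+\beta(N/2-2)} = \eps^\delta$, so each $|\partial_t\omega(w,z_{j,\sigma}^\eps)|$ is $O(\eps^{\delta-1})$ after noting $\norm{w}_{L^2} = O(1)$ on $(0,\tau)$.

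The main obstacle I anticipate is bookkeeping the $\eps$-powers consistently across the three different blocks of Lemma~\ref{sympl-w-tilde-w} simultaneously and making sure that the $\xi$-block, which carries the extra $\eps^\beta$ in front of $\omega(\tilde w, z_{j,0})$ but also the $\frac12\eps^{2\gamma+\beta N-1}a_{j-N}$ term against $\omega(\tilde w, z_{2N+1,0})$, does not produce a power worse than $\eps^{\delta-1}$; one has to verify that the $\beta \ge 1$ assumption is exactly what makes the $\eps^{\beta-1}$, $\eps^{-1}$, $\eps^{1-2\beta}$ triple from \eqref{stima-princ-1}-type bounds all dominated by $\eps^{1-2\beta}\cdot(\text{something} \ge 1)$, equivalently by $\eps^{\text{min}}$ collapsing to the single power appearing in \eqref{stima-princ-2}. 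A secondary subtlety is that $\omega(I_3, z_{j,0})$ for $j$ in the $\xi$-block is \emph{not} zero — it equals $\eps^{1-2\beta}\omega(\tilde w, z_{j-N,0})$ — so one must bound this by $\eps^{1-2\beta}\norm{\tilde w}_{L^2}\norm{\nabla U}_{L^2}$ and fold it into the same estimate; this is harmless but must not be overlooked. Once these powers are tallied, the conclusion $\max_j |\partial_t\omega(w,z_{j,\sigma}^\eps)| = O(\eps^{\delta-1})$ on $(0,\tau)$ with $\tau = O(\eps^{-\delta})$ follows, and integrating shows $\omega(w,z_{j,\sigma}^\eps) = O(1)$, i.e. $w$ stays almost symplectically orthogonal to $\MM_\eps$ for the full time of validity.
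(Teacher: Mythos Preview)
Your approach is essentially the paper's: differentiate the identities of Lemma~\ref{sympl-w-tilde-w} in $t$, feed in $\partial_t\tilde w = I_1+I_2+I_3+I_4$, apply Lemmas~\ref{i1}--\ref{i4}, convert $\norm{\tilde w}_{L^2}$ to $\norm{w}_{L^2}$, and collect $\eps$-powers using $\beta\ge 1$ and \eqref{stima-princ-2}. One small omission: when you differentiate Lemma~\ref{sympl-w-tilde-w} you must also pick up the product-rule terms $\dot\xi_j(t)\,\omega(\tilde w,z_{2N+1,0})$ and $\dot a_{j-N}(t)\,\omega(\tilde w,z_{2N+1,0})$ (not just the $\xi_j$, $a_{j-N}$ coefficients you mention); the paper includes these explicitly and bounds them using \eqref{traj}, which is straightforward but should not be skipped.
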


\begin{proof}
From Lemma \ref{sympl-w-tilde-w}, we have for $j=1,\dots,N$
\[
\partial_{t}\, \omega(w,z_{j,\sigma}^{\eps}) = \eps^{2\gamma+\beta(N-1)}\, \omega(\partial_{t} \tilde w, z_{j,0}) - \frac 12 \eps^{2\gamma+\beta N-1}\, \xi_{j}(t) \omega(\partial_{t} \tilde w, z_{2N+1,0}) - \frac 12 \eps^{2\gamma+\beta N-1}\, \dot \xi_{j}(t) \omega(\tilde w, z_{2N+1,0})
\]
and for $j=N+1,\dots,2N$
\begin{align*}
\partial_{t}\, \omega(w,z_{j,\sigma}^{\eps}) =\, & \eps^{2\gamma+\beta(N+1)-1}\, \omega(\partial_{t} \tilde w, z_{j,0}) + \frac 12 \eps^{2\gamma+\beta N-1}\, a_{j-N}(t) \omega(\partial_{t} \tilde w, z_{2N+1,0}) + \\ 
+\, & \frac 12 \eps^{2\gamma+\beta N-1}\, \dot a_{j-N}(t) \omega(\tilde w, z_{2N+1,0})
\end{align*}

Hence using Lemmas \ref{i1}-\ref{i4}, we have the following estimates: for $j=1,\dots,N$
\begin{align*}
\Big| \partial_{t}\, \omega(w,z_{j,\sigma}^{\eps}) \Big| \le\, & \frac 12 \eps^{2\gamma + \beta N -1} \, \rho |\vv(t)| + \eps^{2\gamma + \beta (N-1) -1} \, \norm{\mathcal{R}_{V}(t,\cdot) U}_{L^{2}}\, \norm{\nabla U}_{L^{2}} +\\
+\, & \norm{\tilde w}_{L^{2}}\, \Big( \eps^{2\gamma + \beta N -1} \norm{|x| \left|\nabla U(x)\right|}_{L^{2}}\, |\vv(t)| + \eps^{2\gamma + \beta (N-1) -1} \norm{\mathcal{R}_{V}(t,\cdot)\, |\nabla U|}_{L^{2}}\Big) +\\
+\, & \eps^{2\gamma+\beta(N-3)+1}\, C(z_{j,0},U) \norm{\tilde w}_{L^{2}}+ \frac 12 \eps^{2\gamma+\beta(N-2)}\, |\xi(t)|\, C(z_{2N+1,0},U) \norm{\tilde w}_{L^{2}}+ \\
+\, & \frac 12 |\xi(t)|\, \norm{\tilde w}_{L^{2}}\, \Big( \frac 12 \eps^{2\gamma + \beta(N+1)-2} \norm{|x| U(x)}_{L^{2}}\, |\vv(t)| + \eps^{2\gamma + \beta N -2} \norm{\mathcal{R}_{V}(t,\cdot)\, U}_{L^{2}}\Big) + \\
+\, & \frac 12 \eps^{2\gamma + \beta N -1}\, \rho^{\frac 12}\, |\dot \xi(t)|\, \norm{\tilde w}_{L^{2}}\\
\end{align*}
for $j=N+1,\dots,2N$
\begin{align*}
\Big| \partial_{t}\, \omega(w,z_{j,\sigma}^{\eps}) \Big| \le\, & \norm{\tilde w}_{L^{2}}\, \Big( \frac 12 \eps^{2\gamma + \beta(N+2)-2} \norm{|x|^{2} U(x)}_{L^{2}}\, |\vv(t)| + \eps^{2\gamma + \beta(N+1)-2} \norm{\mathcal{R}_{V}(t,x)\, |x| U(x)}_{L^{2}}\Big) +\\
+\, & \eps^{2\gamma + \beta (N-1)}\, \norm{|x| U(x)}_{L^{2}}\, \norm{\tilde w}_{L^{2}} + \eps^{2\gamma+\beta(N-1)}\, C(z_{j,0},U) \norm{\tilde w}_{L^{2}}+\\
+\, & \frac 12 |a(t)|\, \norm{\tilde w}_{L^{2}}\, \Big( \frac 12 \eps^{2\gamma + \beta(N+1)-2} \norm{|x| U(x)}_{L^{2}}\, |\vv(t)| + \eps^{2\gamma + \beta N -2} \norm{\mathcal{R}_{V}(t,\cdot)\, U}_{L^{2}}\Big) + \\
+\, & \frac 12 \eps^{2\gamma+\beta(N-2)}\, |a(t)|\, C(z_{2N+1,0},U) \norm{\tilde w}_{L^{2}}+ \frac 12 \eps^{2\gamma + \beta N -1}\, \rho^{\frac 12}\, |\dot a(t)|\, \norm{\tilde w}_{L^{2}}\\
\end{align*}
Moreover we have from the proof of Theorem \ref{main-part-1} that
\begin{align*}
\Big| \partial_{t}\, \omega(w,z_{2N+1,\sigma}^{\eps}) \Big| \le\, & \norm{\tilde w}_{L^{2}} \Big( \frac 12 \eps^{2\gamma + \beta(N+1)-1} \norm{|x| U(x)}_{L^{2}}\, |\vv(t)| + \eps^{2\gamma + \beta N-2} \norm{|\mathcal{R}_{V}(t,\cdot)|\, U}_{L^{2}} +\\
+\, & \eps^{2\gamma + \beta(N-2)}\, C(z_{j,0},U)\Big)
\end{align*}

Arguing now as in the proof of Theorem \ref{main-result}, and using
\[
\norm{\tilde w}_{L^{2}} = \eps^{-\gamma -\beta \frac N2}\, \norm{w}_{L^{2}}
\]
we have for $\beta \ge 1$
\[
\max_{j=1,\dots,2N+1}\, \Big| \partial_{t}\, \omega(w,z_{j,\sigma}^{\eps}) \Big| \le C(U,a_{0},\xi_{0},V)\, \Big( \eps^{2\gamma + \beta(N-1)-1} + \norm{w}_{L^{2}} \eps^{\gamma + \beta(\frac N2-2)}\Big)
\]
By \eqref{stima-princ-2}, this implies that
\[
\max_{j=1,\dots,2N+1}\, \Big| \partial_{t}\, \omega(w,z_{j,\sigma}^{\eps}) \Big| \le C(U,a_{0},\xi_{0},V)\, \eps^{\gamma + \beta(\frac N2-2)}
\]
for all $t\in (0,\tau)$ with $\tau= O(\eps^{-\delta})$. 
\end{proof}


\end{document}